\pgfplotsset{compat=1.14}
\theoremstyle{plain}
\newtheorem{theorem}{Theorem}
\newtheorem*{thma}{Theorem A}
\newtheorem*{thmb}{Theorem B}
\newtheorem*{thmc}{Theorem C}
\newtheorem{lemma}[theorem]{Lemma}
\newtheorem{proposition}[theorem]{Proposition}
\newtheorem{corollary}[theorem]{Corollary}
\theoremstyle{definition}
\newtheorem{remark}[theorem]{Remark}
\DeclareMathOperator{\Cells}{Cells}
\DeclareMathOperator{\HausDist}{HausDist}
\DeclareMathOperator{\dist}{dist}
\DeclareMathOperator{\diam}{diam}
\title{Dynamics on dendrites with closed endpoint sets}
\author{Samuel Roth}
\subjclass[2010]{Primary: 37B45, Secondary: 54C25, 37B05}
\keywords{dendrite, isometric embedding, Li-Yorke chaos, distributional chaos}
\begin{document}
\begin{abstract}
We construct dendrites with endpoint sets isometric to any totally disconnected compact metric space. This allows us to embed zero-dimensional dynamical systems into dendrites and solve a problem regarding Li-Yorke and distributional chaos.
\end{abstract}
\maketitle

\section{Introduction}

A \emph{dendrite} is a locally connected continuum (compact connected metric space) with no subspace homeomorphic to the circle. The \emph{endpoints} and \emph{branchpoints} of a dendrite $X$ are defined, respectively, by
\begin{equation}\label{EB}
\begin{aligned}
E(X) &= \left\{ x ~|~ X\setminus\{x\} \text{ is connected}\right\},\\
B(X) &= \left\{ x ~|~ X\setminus\{x\} \text{ has at least 3 connected components}\right\}.
\end{aligned}
\end{equation}

Dendrites in which the endpoint set $E(X)$ is a closed set were investigated in~\cite{Closed}. They were characterized in terms of two topological properties: branchpoints can accumulate only at endpoints; and for each branchpoint $x$, $X\setminus\{x\}$ must have only finitely many connected components. Equivalently, $X$ must not contain a homeomorphic copy of either of the two dendrites shown in Figure~\ref{fig:forbidden}.

\begin{figure}[b!!]
\begin{minipage}[b]{.45\textwidth}\centering
\begin{tikzpicture}
\foreach \i in {1,...,8} {\draw [thick] (0,0) -- (17*\i-17:{15/(\i+2)});}
\foreach \i in {9,...,11} {\draw [fill] (17*\i-17:{15/(\i+2)}) circle [radius=0.02];}
\end{tikzpicture}
\end{minipage}%
\begin{minipage}[b]{.45\textwidth}\centering
\begin{tikzpicture}
\draw (-3,0) -- (3,0);
\foreach \i in {1,...,50} {\draw [thick] (3/\i,3/\i) -- (3/\i,0);}
\end{tikzpicture}
\end{minipage}
\caption{The two subdendrites which cannot appear in a dendrite with a closed set of endpoints~\cite{Closed}. See Section~\ref{sec:extension} for precise definitions.}
\label{fig:forbidden}
\end{figure}
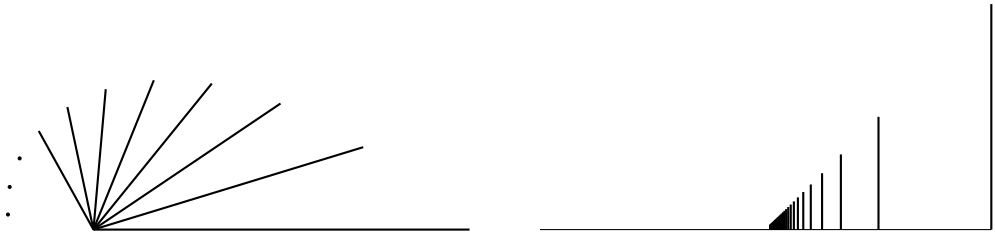

In this paper we start with a related problem: to characterize (up to isometry) the compact metric spaces which can occur as the endpoint set of a dendrite. Note that in this paper a dendrite $X$ always comes equipped with a metric, so that the set of endpoints is a metric subspace. It is compact if and only if $E(X)$ is closed in $X$. Thus, we are interested precisely in the class of dendrites studied in~\cite{Closed}.

It is easy to show that the endpoint set of a dendrite is \emph{totally disconnected}, i.e. the only nonempty connected subsets are the singletons. Our main construction (section~\ref{sec:construction}) shows that this condition is also sufficient for a compact metric space to arise as the endpoint set of a dendrite:

\begin{thma}\label{th:endpoints}
Every compact, totally disconnected metric space is isometric with the (necessarily closed) endpoint set of some dendrite.
\end{thma}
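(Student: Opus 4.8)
The plan is to split the problem into a purely topological construction followed by a metric extension. Given a compact totally disconnected metric space $(K,d)$ with $|K|\ge 2$ (the singleton being the degenerate dendrite), I would first build a dendrite $X$, as a topological space, whose endpoint set is homeomorphic to $K$, and only afterwards install a compatible metric on $X$ that restricts to \emph{exactly} $d$ on $K$. The point of this separation is that ``dendrite'' and ``endpoint'' are purely topological notions, so the combinatorics and the isometry can be handled independently.

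For the topological step I would exploit zero-dimensionality: for compact metric spaces total disconnectedness is equivalent to having a basis of clopen sets. Choose a nested sequence of finite clopen partitions $\mathcal P_0=\{K\}$, $\mathcal P_1,\mathcal P_2,\dots$, with $\mathcal P_1$ nontrivial, each piece of $\mathcal P_n$ of $\diam < 2^{-n}$, and $\mathcal P_{n+1}$ refining $\mathcal P_n$; compactness makes each $\mathcal P_n$ finite, and the vanishing diameters force every point of $K$ to be the intersection of a unique decreasing chain of pieces. These partitions form a rooted, finitely branching tree $\T$ whose level-$n$ vertices are the pieces of $\mathcal P_n$, with an edge from each piece to the pieces one level below that it contains (a singleton piece being declared its own child at every deeper level). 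I would realize $\T$ geometrically by attaching an arc for each edge, giving the arcs out of a level-$n$ vertex length at most $2^{-n}$, and let $X$ be the completion of this realization.

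The main obstacle is verifying that $X$ is a dendrite whose endpoint set is precisely $K$. That $X$ is connected and tree-like (hence contains no circle) is immediate, and compactness together with local connectedness follow from the uniformly shrinking edge lengths at each level. The delicate point is the identification of endpoints: interior points of arcs have order two, every vertex has at least two incident arcs once each piece has a child and $\mathcal P_1$ is nontrivial, and the only remaining points are the \emph{ends} of infinite descending branches. Each such branch is a decreasing chain of clopen pieces of vanishing diameter, hence names exactly one point of $K$, and this assignment is a homeomorphism of the end set onto $K$, since two ends are close in $X$ precisely when their branches agree to a deep level, i.e. when the corresponding points of $K$ lie in a common small-diameter piece. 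Thus $E(X)\cong K$, and since $K$ is compact it is automatically closed in $X$, consistent with the characterization of~\cite{Closed}.

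It remains to promote the homeomorphism $E(X)\cong K$ to an isometry, and here I would hand the metric construction over wholesale to the classical Hausdorff metric extension theorem. Since $E(X)$ is a closed subspace of the metrizable space $X$, and $d$ induces the subspace topology on $E(X)\cong K$, the theorem yields a metric $\rho$ on $X$ inducing the topology of $X$ and satisfying $\rho|_{E(X)\times E(X)}=d$. Because dendrites and endpoints are topological, $(X,\rho)$ is still a dendrite with the same endpoint set, and that endpoint set, with its subspace metric, is now isometric to $(K,d)$ by construction. I expect all the genuine difficulty to reside in the topological verifications of the second and third paragraphs, with the isometry following formally from the extension theorem.
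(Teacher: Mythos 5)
Your proposal is correct, but it takes a genuinely different route from the paper. The paper gives a single, self-contained construction in which the metric is built in from the start: it decomposes $X$ canonically into \emph{cells} (equivalence classes of $\theta$-chain connectedness, over all $\theta\geq 0$), orders them by inclusion into a tree skeleton, and replaces each parent--child edge by an arc whose length is exactly the Hausdorff distance between the two cells; since $\HausDist(\{x\},\{y\})=d(x,y)$, the map $x\mapsto\{x\}$ is an isometry onto the set of singleton cells by construction, and the work goes into verifying that the resulting space is a compact dendrite whose endpoints are precisely the singleton cells. You instead split the problem into a topological step and a metric step: your nested clopen partitions produce (non-canonically) essentially the same kind of tree, and your geometric realization recovers the known fact -- which the paper itself attributes to the Gehman-dendrite construction of Ko\v{c}an, Korneck\'{a}-Kurkov\'{a}, and M\'{a}lek -- that every compact totally disconnected metric space is \emph{homeomorphic} to the endpoint set of a dendrite; you then upgrade homeomorphism to isometry by invoking Hausdorff's metric extension theorem (every compatible metric on a closed subspace of a metrizable space extends to a compatible metric on the whole space), noting that being a dendrite and being an endpoint are purely topological. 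This is a legitimate and noticeably more modular proof: it shows that Theorem A is a formal consequence of the known topological embedding plus a classical extension theorem, at the price of leaning on that nontrivial classical result and producing a completely non-explicit metric. The paper's approach buys explicitness and canonicity -- the metric has concrete geometric meaning (edge lengths tied to Hausdorff distances between cells, hence to diameters), which makes the compactness, local connectedness, and endpoint verifications quantitative rather than delegated -- and it avoids any black-box appeal. One small point of care in your sketch: the claim that the ends of the realization are exactly the endpoints, and that the end-to-point correspondence is a homeomorphism, deserves the same explicit treatment the paper gives its analogous step (its ``Endpoints of $Y$'' and ``Cutpoints of $Y$'' arguments), but your outline of why it holds (finitely branching tree, geometrically decaying edge lengths, branches separating exactly when points lie in different pieces) is sound.
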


In the next part of the paper, Section~\ref{sec:extension}, we ask about extending dynamical systems from the closed endpoint set of a dendrite to the whole dendrite. There is a simple topological result that any self-map on a closed subset of a dendrite can be extended to the whole dendrite~\cite[Lemma 2.4]{BHS}, but we wish to make the extension in a more controlled way. A point $x\in X$ is \emph{eventually fixed} for the map $f:X\to X$ if for some $n\geq0$, the iterated image $f^n(x)$ is a fixed point, i.e. $f^n(x)=f^{n+1}(x)$. Using techniques from~\cite{Spitalsky} and the results from~\cite{Closed} we prove

\begin{thmb}\label{th:extension}
Let $Y$ be a dendrite with a closed set of endpoints. Any continuous map $f:E(Y)\to E(Y)$ can be extended to a continuous map $F:Y \to Y$ in such a way that each point of $Y\setminus E(Y)$ is eventually fixed.
\end{thmb}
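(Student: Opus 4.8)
The plan is to split $Y$ into the closed endpoint set $E(Y)$, where $F$ is forced to equal $f$, and the open remainder $Y\setminus E(Y)$, on which we have freedom to route points to fixed points. A naive extension (such as the purely topological one from \cite[Lemma 2.4]{BHS}) fails badly for our purpose: if $e$ lies on a periodic $f$-orbit, then points just inside $e$ tend to inherit a periodic, rather than eventually fixed, itinerary. The idea I would use is already visible in the interval model $E(Y)=\{0,1\}$ with $f$ the flip: combine two mechanisms, a \emph{collapsing core}, a subdendrite fixed by $F$ onto which a whole neighborhood is collapsed so that its points become fixed after one step, and an \emph{expansion collar} around $E(Y)$ in which $F$ imitates the action of $f$ but strictly pushes points inward, away from $E(Y)$, at a definite geometric rate. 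On the arc one checks that collapsing the middle to a fixed point while making $F$ expanding near the $2$-cycle $\{0,1\}$ forces every interior orbit out of the collar and into the core in finitely many steps, which is exactly eventual fixedness; the same dichotomy should drive the general argument.

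To set this up I would first invoke \cite{Closed}: since $E(Y)$ is closed, every branchpoint has finite order and branchpoints accumulate only at endpoints. Consequently $Y$ is the closure of an increasing union of finite topological trees $T_1\subseteq T_2\subseteq\cdots$, and for each $n$ there is a first-point (monotone) retraction $r_n\colon Y\to T_n$ collapsing each branch beyond $T_n$ to the point where it meets $T_n$. These retractions supply a continuous ``depth'' function $\rho\colon Y\to[0,\infty)$ with $\rho^{-1}(0)=E(Y)$, together with a way to collapse the intricate periphery of $Y$ onto a finite skeleton. I would fix a finite subtree $T\subseteq Y\setminus E(Y)$ as the core and define $F$ on $T$ so that every point of $T$ is eventually fixed, for instance by collapsing $T$ to a single fixed point, while the collar is the union of the (uniformly small) branches of $Y$ beyond $T$, each meeting $T$ in one root and containing the nearby endpoints.

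The heart of the construction is the definition of $F$ on the collar and its continuous extension, for which I would use the arc-wise technique of \cite{Spitalsky}. I would specify $F$ first on the vertex set $E(Y)\cup B(Y)$, setting $F=f$ on $E(Y)$, and then on each free arc as a suitably reparametrized, plateaued map onto the arc between the images of its endpoints. Along the arc running inward from an endpoint $e$, parametrized by depth $t$, I want $F$ to fix depth $0$ (so $F(e)=f(e)$), to send depth $t$ into the corresponding arc beneath $f(e)$ at depth $g(t)$ with $g(t)>t$ and $g'(0)>1$, and to reach the inner edge of the collar where it is matched to the collapsing core. Because iterating $F$ along the $f$-orbit of $e$ then multiplies depth by a factor bounded below by $g'(0)>1$, every collar point reaches depth $\delta$ after finitely many steps, enters the core, and is frozen; combined with the core collapse this yields eventual fixedness for all of $Y\setminus E(Y)$.

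The main obstacle is to achieve continuity of $F$ and eventual fixedness \emph{simultaneously}, reconciled at the closed set $E(Y)$. Two points need genuine work. First, the collar maps around different endpoints must agree on shared branchpoints and glue across the countably many free arcs, even though $f$ may reshuffle the endpoints of one branch into many different branches; here I would lean on the closed-endpoint structure of \cite{Closed}, which makes the branches beyond depth $\delta$ uniformly small, so that the prescribed arc images vary continuously and the extension is uniformly continuous in the sense of \cite{Spitalsky}. Second, one must verify that a single geometric expansion rate can be chosen consistently along \emph{every} $f$-orbit, including recurrent and periodic orbits, where the danger of trapped periodic interior orbits is greatest, while keeping $F$ continuous exactly where the branches accumulate at endpoints. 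Establishing this uniform expansion without destroying continuity at $E(Y)$ is the delicate technical core of the proof, and it is precisely the finiteness of branchpoint orders and the closedness of $E(Y)$ that make it feasible.
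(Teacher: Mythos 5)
Your proposal assembles the right ingredients (the characterization from \cite{Closed}, a tree exhaustion of $Y\setminus E(Y)$ with first-point retractions, free arcs, the technique of \cite{Spitalsky}, and the goal of driving every interior point to a fixed point in finitely many steps), but its central mechanism --- a ``depth'' coordinate that $F$ expands at a uniform geometric rate --- contains a genuine gap, and it is exactly the gap you yourself defer as ``the delicate technical core.'' The rule ``send the point at depth $t$ beneath $e$ to the point at depth $g(t)$ beneath $f(e)$'' is not well defined on a dendrite: a collar point $x$ lies beneath many endpoints simultaneously, at \emph{different} depths, because the arcs from distinct endpoints $e_1,e_2$ to the core merge at a branchpoint and share their terminal segment. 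On that shared segment your rule prescribes both ``depth $g(t_1)$ beneath $f(e_1)$'' and ``depth $g(t_2)$ beneath $f(e_2)$'' with $t_1\neq t_2$ and $f(e_1)\neq f(e_2)$, which are different points in general. Uniform smallness of the branches does not repair this, since $f$ may scatter the endpoints of a single branch across many different branches (a problem you acknowledge but do not resolve). So neither the well-definedness of $F$ on the collar, nor the existence of a single expansion rate valid along arbitrary (recurrent, periodic) $f$-orbits, is actually established --- and these are not routine verifications; they are the whole difficulty of the theorem.

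The paper's proof avoids both problems by making the descent combinatorial instead of metric. Using the closed-endpoint structure it builds trees $\{p_0\}=T_0\subset T_1\subset\cdots$ with $\bigcup_i T_i=Y\setminus E(Y)$, where each $T_i$ is obtained from $T_{i-1}$ by attaching a single free arc $\alpha_i=[p_{j(i)},p_i]$ with $j(i)<i$. One chooses endpoints $e_i$ with $d(p_i,e_i)\to0$ and points $q_i\in T_{i-1}$ with $d(q_i,f(e_i))\to0$, sets $F=f$ on $E(Y)$ and $F(p_i)=q_i$, and maps $\alpha_i$ homeomorphically onto the arc $[q_{j(i)},q_i]\subset T_{i-1}$. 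Gluing is then trivial, because distinct arcs $\alpha_i$ meet only at the designated points $p_i$; and eventual fixedness is automatic, because $F$ maps $T_i$ into $T_{i-1}$, so every interior point reaches the fixed point $p_0$ after finitely many steps --- no expansion estimate, no plateaus, no consistency condition along orbits. Continuity at an endpoint $e$ follows since $p_{i}\to e$ forces $q_{i}\to f(e)$ while $\diam F(\alpha_{i})\to0$. If you want to salvage your approach, the lesson is to stop trying to conjugate $f$ structurally near $E(Y)$: the image of $\alpha_i$ need not sit ``beneath'' $f(e_i)$ in any sense; it only needs to be metrically close to $f(e_i)$ and to lie inside the previously constructed tree.
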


Combining Theorems A and B gives us a way to embed dynamical systems into dendrites while preserving many basic dynamical properties (topological entropy, presence/absence of Li-Yorke chaos, etc.). This extends the work of Ko\v{c}an, Kurkov\'{a}, and M\'{a}lek, who use the Gehman dendrite and its subdendrites to represent every shift-invariant subset of $\{0,1\}^\mathbb{N}$ -- in this way, any subshift can be embedded into a dendrite map so that all other points are eventually fixed \cite{KKM}. Shift spaces are a rich source of examples and counterexamples in dynamical systems, and this construction has led to an abundance of dendrite maps with various combinations of chaotic and non-chaotic behavior \cite{Drwiega, Drwiega19, Koc, KKM14, Zuzka}. Our construction allows us to start with other zero-dimensional systems and gives us isometric embeddings rather than just topological ones. As an application, in Section~\ref{sec:solution} we solve an open problem~\cite[Question 2]{Zuzka} regarding distributional chaos and Li-Yorke chaos on dendrites (we defer the definitions to Section~\ref{sec:solution}):

\begin{thmc}\label{th:DC3}
There is a dynamical system on a dendrite which is DC3 but not Li-Yorke chaotic.
\end{thmc}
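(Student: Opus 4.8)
The plan is to combine Theorems A and B with a suitable \emph{zero-dimensional} model, exploiting the fact -- which is the whole reason we insisted on isometric rather than merely topological embeddings -- that DC3, unlike DC1, DC2, or Li--Yorke chaos, is sensitive to the choice of metric. Concretely, I would first produce a compact, totally disconnected metric space $Z$ carrying a homeomorphism $g\colon Z\to Z$ that is DC3 but has no Li--Yorke pair at all; then realize $Z$ isometrically as the endpoint set $E(Y)$ of a dendrite via Theorem A; then extend $g$ to $F\colon Y\to Y$ with every point of $Y\setminus E(Y)$ eventually fixed via Theorem B; and finally check that DC3 passes from $(Z,g)$ to $(Y,F)$ while Li--Yorke chaos does not appear.

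The transfer step is the routine half, and it is where isometry does the work. Since Theorem B gives $F|_{E(Y)}=g$ and $F(E(Y))\subseteq E(Y)$, the $F$-orbit of any endpoint stays in $E(Y)$, and because the identification $E(Y)\cong Z$ is isometric we get $\dist(F^i x,F^i y)=\dist(g^i x,g^i y)$ for all $x,y\in E(Y)$ and all $i$. Hence the lower and upper distribution functions of a pair of endpoints are literally the same computed in $(Y,F)$ or in $(Z,g)$, so any DC3 scrambled set of $(Z,g)$ remains one in $(Y,F)$. For the absence of Li--Yorke chaos, note that two eventually fixed points can never form a Li--Yorke pair, since their orbit distance is eventually constant and so its $\liminf$ and $\limsup$ agree; thus any scrambled set of $(Y,F)$ meets $Y\setminus E(Y)$ in at most one point, while its remaining points lie in $E(Y)$ and form, by the isometry, a scrambled set of $(Z,g)$. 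If $(Z,g)$ has no Li--Yorke pair this forces every scrambled set of $(Y,F)$ to have at most two points, so $(Y,F)$ is not Li--Yorke chaotic.

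The real content, and the main obstacle, is building the zero-dimensional system $(Z,g)$. I would make $g$ \emph{distal}, so that $\inf_i \dist(g^i x,g^i y)>0$ for every $x\neq y$ and hence no proximal (a fortiori no Li--Yorke) pair exists; this is exactly what kills Li--Yorke chaos. The difficulty is then to exhibit DC3 \emph{within} a distal system, which rules out the subshift models used in \cite{KKM} and elsewhere, since a distal subshift is necessarily finite. The idea is to arrange, for an uncountable family of pairs, that the orbit distance $\dist(g^i x,g^i y)$ oscillates between two separated positive values $\delta<D$ with the frequency of the value $\delta$ having a strictly smaller $\liminf$ than $\limsup$; then for every threshold $t\in(\delta,D)$ the two distribution functions differ, giving DC3 on a full nondegenerate interval while each such pair stays bounded away from the diagonal. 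Producing this oscillation is where the work concentrates: in a uniquely ergodic system the relevant Birkhoff averages would converge and no oscillation could occur, so $(Z,g)$ must be taken distal but \emph{not} uniquely ergodic -- for instance as a non-expansive isometric group extension of an odometer, chosen so that uncountably many points fail to be generic for a single invariant measure. Theorem A lets us prescribe the metric on $Z$ freely, which I would use to place the two clusters of orbit distances at the desired separated levels $\delta$ and $D$.
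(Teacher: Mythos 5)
Your transfer step is correct, and it is essentially the paper's own: it reproduces Corollary~\ref{cor:main} together with the argument in the paper's proof of Theorem~C (eventually fixed points never form Li--Yorke pairs; an isometric embedding with $F\circ i=i\circ f$ preserves the distribution functions $\Phi$ and $\Phi^*$ of pairs of endpoints). The gap is in what you call the real content: the zero-dimensional model $(Z,g)$ you describe -- distal, with an uncountable DC3 scrambled set -- does not exist, so this half of the proposal cannot be completed.

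Here is why the proposed route fails. Since Theorem~A needs $Z$ totally disconnected, the fiber group of your isometric extension of an odometer acts faithfully on a zero-dimensional space and is therefore profinite. But a continuous cocycle extension of an odometer by a profinite group $G$ is automatically \emph{equicontinuous}: for any open normal subgroup $N\trianglelefteq G$ the reduced cocycle $y\mapsto\sigma(y)N$ into the finite group $G/N$ is locally constant, the odometer preserves cylinders, and hence two points of the same sufficiently fine cylinder have cocycle products $\sigma^{(n)}(y'),\,\sigma^{(n)}(y)$ differing by an element of $N$ for \emph{every} $n$ (normality lets you collect all the $N$-errors on one side). This is exactly the mechanism that Furstenberg's non-uniquely-ergodic Anzai examples evade by having a \emph{connected} fiber (the circle), where small elements need not lie in small subgroups and errors can accumulate; with profinite fibers they cannot. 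Consequently every orbit closure of your system, and of its product with itself, is minimal equicontinuous, hence a group rotation, hence uniquely ergodic -- precisely the situation you yourself identify as fatal. Moreover, no choice of metric (the freedom Theorem~A gives you) can repair this: if the orbit closure $W$ of a pair $(x,y)$ in the product is uniquely ergodic with invariant measure $\mu$, then for \emph{any} compatible metric one has $\Phi_{x,y}(s)\geq\mu\{(u,v)\in W : d(u,v)<s\}$ and $\Phi^*_{x,y}(s)\leq\mu\{(u,v)\in W : d(u,v)\leq s\}$, so that $\Phi^*_{x,y}(s)-\Phi_{x,y}(s)\leq\mu\{(u,v)\in W : d(u,v)=s\}$, which is positive for at most countably many $s$; hence $\Phi_{x,y}=\Phi^*_{x,y}$ on a dense set and no DC3 pair exists. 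This obstruction is not special to your building block: factor maps of minimal distal systems are open, so the Furstenberg tower of a zero-dimensional minimal distal system stays zero-dimensional, the same profinite argument makes every stage equicontinuous, and since distal systems are semisimple (every point is almost periodic), every pair in a zero-dimensional distal system has a minimal -- hence uniquely ergodic -- orbit closure. In short: distality plus zero-dimensionality excludes even a single DC3 pair, for every compatible metric.

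This explains why the paper's model is built the opposite way. Its system is deliberately \emph{not} distal and \emph{does} have Li--Yorke pairs: it is a skew product with an isometric base (adding machine $\times$ identity) over a \emph{countable} transient fiber $C$, through which the relevant orbits wander without ever being recurrent -- behavior impossible in a distal system. Li--Yorke chaos is then excluded not by forbidding Li--Yorke pairs but by cardinality: proximality forces the two isometric base coordinates to agree, so any Li--Yorke scrambled set injects into the countable fiber. Your DC3 mechanism (orbit distances oscillating between two separated levels with non-converging frequencies) is indeed the paper's mechanism, but it is produced by the super-exponentially growing column lengths of the wandering fiber orbit, not by a distal, non-uniquely-ergodic recurrence, which in the zero-dimensional setting cannot exist.
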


\section{Definitions and notation}

Let $(X,d)$ be a compact metric space.  We denote the open ball of radius $r>0$ around a point $x$ as $B_X(x,r)$. A \emph{neighborhood} of $x$ is any set (not necessarily open) which contains a whole open ball around $x$. Any set which is both open and closed in $X$ will be called \emph{clopen}. A point $x$ is \emph{isolated} in $X$ if the singleton $\{x\}$ is itself a neighborhood of $x$. The \emph{diameter} of a set $A\subset X$ is $$\diam(A)=\sup_{a, a' \in A} d(a, a').$$ The (non-symmetric) \emph{distance} of a set $A$ from a set $B$ is $$\dist(A,B)=\adjustlimits\sup_{a\in A} \inf_{b\in B} d(a,b).$$ The \emph{Hausdorff distance} between two sets is $$\HausDist(A,B) = \max\{ \dist(A,B), \dist(B,A) \}.$$ The collection of all nonempty closed subsets of $X$ equipped with the Hausdorff distance is itself a compact metric space~\cite[Theorem 4.13]{Nadler}. It is sometimes denoted $2^X$ and referred to as the \emph{hyperspace} of $X$.

An \emph{arc} in a topological space $X$ is a subspace of $X$ homeomorphic with the unit interval $[0,1]$. If an arc minus its endpoints is an open set in $X$ then we refer to it as a \emph{free arc}.

In~\eqref{EB} we gave a definition of endpoints which applies only to dendrites. More generally, in a continuum $X$ having more than one point we call $x$ an \emph{endpoint} if there are arbitrarily small neighborhoods $U$ of $x$ such that the boundary of $U$ is a singleton. This agrees with what we defined before by~\cite[Theorem 10.13]{Nadler}. Moreover, we define a \emph{cutpoint} in a continuum $X$ as any point $x$ such that $X\setminus\{x\}$ is disconnected. A continuum is a dendrite if and only if each of its points is an endpoint or a cutpoint~\cite[Theorem 10.7]{Nadler}.

As mentioned before, a topological space $X$ is called \emph{totally disconnected} if its only nonempty connected subsets are singletons. A metric space is called \emph{zero-dimensional} if each of its points has clopen neighborhoods with arbitrarily small diameters. A compact metric space is totally disconnected if and only if it is zero-dimensional, see, eg.~\cite{Hurewicz}.

\section{Dendrites with a prescribed endpoint set}\label{sec:construction}

Given a totally disconnected compact metric space $(X,d)$, the goal of this section is to construct a dendrite $Y$ with a metric $\rho$ so that the endpoint set $E(Y)$ is isometric with $X$.

The construction takes place in several stages. Subsection~\ref{sec:chains} uses chain-connected sets to decompose $X$ into ``cells.'' Subsection~\ref{sec:skeleton} arranges these cells in a graph which forms the skeleton of the dendrite. Subsection~\ref{sec:Yrho} defines $Y$ by replacing the edges of the graph with arcs. It also defines the metric $\rho$. Subsection~\ref{sec:proof} proves that $Y$ truly is a dendrite with endpoint set isometric to $X$.

\subsection{Chains and cells}\label{sec:chains}\strut\\
Let $(X,d)$ be any compact metric space. A \emph{$\theta$-chain} is a finite sequence of points $x_1, x_2, \ldots, x_n \in X$ such that $d(x_i,x_{i+1})\leq\theta$ for all $i<n$. We say that two points $x, y\in X$ are \emph{connected by a $\theta$-chain}, and write $x \sim_\theta y$, if there is a $\theta$-chain $x_1, \ldots, x_n$ with $x_1=x$ and $x_n=y$. Observe that $\sim_\theta$ is an equivalence relation on $X$. The elements of the induced partition are called \emph{$\theta$-cells}. The collection of all \emph{cells} of $X$ is
\begin{equation*}
\Cells(X) = \left\{ A \subseteq X ~|~ \text{$A$ is a $\theta$-cell for some $\theta\geq0$} \right\}.
\end{equation*}

Two examples of compact metric spaces are shown in Figure~\ref{fig:decomposition}. The figure shows several cells in each space as gray shaded regions.

\begin{figure}[htb!!]
\begin{minipage}[c]{.45\textwidth}\centering
\scalebox{0.65}{
\begin{tikzpicture}[
declare function={
fx(\x,\y,\i) = (\i==1)*(2*cos(150)+1/3.5*(\x)) + (\i==2)*(2*cos(270)+1/3.5*(\x)) + (\i==3)*(2*cos(30)+1/3.5*(\x));
fy(\x,\y,\i) = (\i==1)*(2*sin(150)+1/3.5*(\y)) + (\i==2)*(2*sin(270)+1/3.5*(\y)) + (\i==3)*(2*sin(30)+1/3.5*(\y));
}
]
\path [fill=gray!10] (0,0) circle [radius=3.7];
\foreach \i in {1,...,3} {
 \path [fill=gray!25] ({fx(0,0,\i)},{fy(0,0,\i)}) circle [radius=1.4];
}
\foreach \i in {1,...,3} {
 \foreach \j in {1,...,3} {
  \path [fill=gray!50] ({fx(fx(0,0,\i),fy(0,0,\i),\j)},{fy(fx(0,0,\i),fy(0,0,\i),\j)}) circle [radius=0.45];
}}
\foreach \i in {1,...,3} {
 \foreach \j in {1,...,3} {
  \foreach \k in {1,...,3} {
   \draw [fill] ({fx(fx(fx(0,0,\i),fy(0,0,\i),\j),fy(fx(0,0,\i),fy(0,0,\i),\j),\k)},{fy(fy(fx(0,0,\i),fy(0,0,\i),\j),fy(fx(0,0,\i),fy(0,0,\i),\j),\k)}) circle [radius=0.1];
}}}
\end{tikzpicture}
}%end scalebox
\end{minipage}%
\begin{minipage}[c]{.45\textwidth}\centering
\scalebox{0.8}{\begin{tikzpicture}
\foreach \i in {1,...,20} {\draw [fill] (5/\i,0) circle [radius=0.1] node (\i) {};}
\draw [fill] (0,0) circle [radius=0.2] node (0) {};
\begin{pgfonlayer}{background}
  \node[fit=(0)(1), rounded corners=0.3cm, inner xsep=50pt, minimum height=2cm, fill=gray!10] {};
  \node[fit=(0)(2), rounded corners=0.25cm, inner xsep=40pt, minimum height=1.3cm, fill=gray!25] {};
  \node[fit=(1), rounded corners=0.25cm, inner xsep=10pt, minimum height=1.3cm, fill=gray!25] {};
  \node[fit=(0)(3), rounded corners=0.2cm, inner xsep=7pt, minimum height=0.7cm, fill=gray!50] {};
  \node[fit=(2), rounded corners=0.2cm, inner xsep=5pt, minimum height=0.7cm, fill=gray!50] {};
\end{pgfonlayer}
\end{tikzpicture}}
\end{minipage}
\caption{Two examples of the division of a space into cells. The space on the left is a Cantor set in the plane. The space on the right is the set $\{\frac1n ~|~ n\in\mathbb{N}\} \cup \{0\}$ in the real line.}
\label{fig:decomposition}
\end{figure}
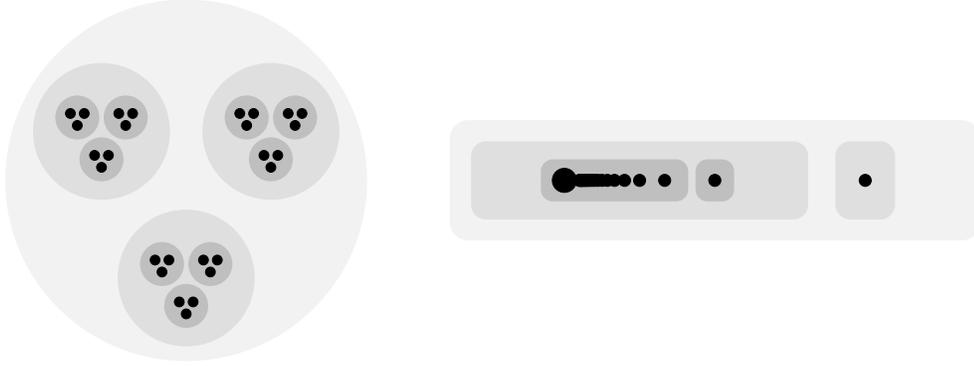

We describe the basic properties of cells through a sequence of lemmas.

\begin{lemma}\label{lem:cells}
In any compact metric space $(X,d)$,
\begin{enumerate}[label=(\roman*)]
\item\label{it:0-cells} All $0$-cells are singletons,
\item\label{it:clopen} All $\theta$-cells, $\theta>0$ are both closed and open in $X$,
\item\label{it:finite} For fixed $\theta>0$ there are only finitely many $\theta$-cells,
\item\label{it:contained} Two cells are either disjoint, or else one is contained in the other,
\item\label{it:union} If $0<\eta\leq\theta$, then each $\theta$-cell is a finite union of $\eta$-cells, and
\item\label{it:greater} If a $\theta$-cell strictly contains an $\eta$-cell, then $\theta > \eta$.
\end{enumerate}
\end{lemma}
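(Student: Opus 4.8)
The plan is to isolate a single \emph{refinement property} and derive almost everything from it: whenever $\eta \leq \theta$, every $\eta$-chain is also a $\theta$-chain, so $x \sim_\eta y$ implies $x \sim_\theta y$. Equivalently, the partition of $X$ into $\eta$-cells refines the partition into $\theta$-cells, so that each $\eta$-cell sits inside a unique $\theta$-cell. I would record this observation first, since parts \ref{it:contained}--\ref{it:greater} are essentially corollaries of it.

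Item \ref{it:0-cells} follows immediately from the definition: a $0$-chain forces $d(x_i,x_{i+1}) \leq 0$, hence $x_i = x_{i+1}$, so $x \sim_0 y$ holds only when $x = y$. For \ref{it:clopen}, fix a $\theta$-cell $A$ with $\theta > 0$ and a point $x \in A$. If $y \in B_X(x,\theta)$ then $d(x,y) < \theta$, so $x,y$ is a $\theta$-chain and $y \in A$; thus $B_X(x,\theta) \subseteq A$ and $A$ is open. Because the $\theta$-cells partition $X$, the complement of $A$ is a union of (open) $\theta$-cells, hence open, so $A$ is also closed. Item \ref{it:finite} is then a compactness argument: the $\theta$-cells form an open cover of the compact space $X$, and being pairwise disjoint, any finite subcover must already list all of them.

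For the structural claims I would lean on the refinement property. Given two cells that intersect, say a $\theta$-cell $A$ and an $\eta$-cell $B$ sharing a point $p$, assume without loss of generality $\eta \leq \theta$; then any $y \in B$ satisfies $y \sim_\eta p$, hence $y \sim_\theta p$, so $y \in A$, giving $B \subseteq A$ and proving \ref{it:contained}. The same refinement shows each $\theta$-cell is a union of $\eta$-cells when $0 < \eta \leq \theta$, and this union is finite by \ref{it:finite}, which is \ref{it:union}. Finally, for \ref{it:greater} suppose a $\theta$-cell $A$ strictly contains an $\eta$-cell $B$ but, for contradiction, $\theta \leq \eta$; then refinement places $A$ inside the $\eta$-cell of any point $p \in B$, which is $B$ itself, so $A \subseteq B \subseteq A$, forcing $A = B$ and contradicting strictness.

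None of the steps presents a serious obstacle; the only point demanding care is keeping the parameters straight in \ref{it:contained} and \ref{it:greater}, where two cells with different scales $\theta$ and $\eta$ interact, so that the refinement property gets applied in the correct direction. A minor subtlety in \ref{it:clopen} is the mismatch between the strict inequality defining the open ball $B_X(x,\theta)$ and the non-strict inequality in the chain condition, but this only helps, since $d(x,y) < \theta$ certainly yields $d(x,y) \leq \theta$.
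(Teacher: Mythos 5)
Your proof is correct, and it is precisely the argument the paper has in mind: the paper omits the proof of this lemma, stating only that it ``follows immediately from the definitions and the compactness of $X$,'' which is exactly what you carry out (chains with smaller tolerance are chains with larger tolerance, open balls of radius $\theta$ stay inside $\theta$-cells, and disjoint open covers of a compact space are finite). Isolating the refinement property up front is a clean way to organize parts \ref{it:contained}--\ref{it:greater}, and your handling of the edge cases (the strict-versus-nonstrict inequality in \ref{it:clopen}, the $\theta=\eta$ case in \ref{it:greater}) is sound.
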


\noindent Lemma~\ref{lem:cells} follows immediately from the definitions and the compactness of $X$. The proof is omitted.

%\begin{proof}
%\ref{it:0-cells}: $x\sim_0 y$ if and only if $x=y$.
%
%\ref{it:clopen}: Let $\theta>0$ and let $A$ be a $\theta$-cell. If $a\in A$ and $x\in X\setminus A$, then $a \not\sim_\theta x$ and therefore $d(a,x)>\theta$. This shows that $A$ and $X\setminus A$ are both open in $X$.
%
%\ref{it:finite}: $X$ is compact and the $\theta$-cells are pairwise disjoint open sets that cover $X$.
%
%\ref{it:contained}: Let $A$ be a $\theta$-cell and $B$ an $\eta$-cell with $A \cap B \neq \emptyset$. Suppose without loss of generality that $\theta \leq \eta$. Fix $x\in A\cap B$. If $a\in A$ then $x\sim_\theta a$, but $\theta$-chains are also $\eta$-chains, so $x\sim_\eta a$ and $a\in B$ as well. Thus $A\subseteq B$.
%
%\ref{it:union}: The $\eta$-cells form a finite partition of $X$ and each $\theta$-cell contains every $\eta$-cell that it intersects.
%
%\ref{it:greater}: If the $\theta$-cell $B$ strictly contains the $\eta$-cell $A$, then choose $a\in A$ and $b \in B\setminus A$. Then $a \sim_\theta b$ but $a \not\sim_\eta b$, so we must have $\eta<\theta$.
%
%%\ref{it:finite2}: This follows from \ref{it:finite} and \ref{it:union}.
%\end{proof}

\begin{lemma}\label{lem:diam}
If the compact metric space $(X,d)$ is totally disconnected, then for each $r>0$ there exists $\theta>0$ such that each $\theta$-cell has diameter less than $r$.
\end{lemma}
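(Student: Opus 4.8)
The plan is to reduce the claim to a Lebesgue-number argument for a suitable finite clopen partition of $X$. The starting point is the fact recorded in the Definitions and notation section: a compact totally disconnected metric space is zero-dimensional, so every point has clopen neighborhoods of arbitrarily small diameter. I will exploit this to cover $X$ by finitely many clopen pieces of diameter less than $r$, convert them into a partition, and then show that once $\theta$ is smaller than the gaps between the pieces, no $\theta$-chain can leave a single piece.

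First I would fix $r>0$ and, for each $x\in X$, use zero-dimensionality to choose a clopen neighborhood $U_x\ni x$ with $\diam U_x<r$. Compactness yields a finite subcover $U_{x_1},\dots,U_{x_k}$. Disjointifying in the standard way, by setting $V_i=U_{x_i}\setminus(U_{x_1}\cup\cdots\cup U_{x_{i-1}})$ and discarding empty sets, produces a finite partition of $X$ into pairwise disjoint clopen sets $V_1,\dots,V_m$, each still of diameter less than $r$ since $V_i\subseteq U_{x_i}$. Each $V_i$ is clopen in the compact space $X$, hence compact, so distinct pieces $V_i,V_j$ are disjoint compact sets and their gap $\inf\{d(a,b):a\in V_i,\ b\in V_j\}$ is strictly positive. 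As there are only finitely many pieces, I can set $\delta=\min_{i\ne j}\inf\{d(a,b):a\in V_i,\ b\in V_j\}>0$.

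Now choose any $\theta$ with $0<\theta<\delta$; I claim every $\theta$-cell lies inside a single $V_i$. Suppose a $\theta$-cell $A$ met two pieces, say $x\in A\cap V_i$ and $y\in A\cap V_j$ with $i\ne j$. A $\theta$-chain $x=z_1,\dots,z_n=y$ joins them, and each $z_\ell$ lies in exactly one piece of the partition; since the first and last lie in different pieces, some consecutive pair $z_\ell,z_{\ell+1}$ falls in distinct pieces, forcing $d(z_\ell,z_{\ell+1})\ge\delta>\theta$ and contradicting that the chain has steps of size at most $\theta$. Hence each $\theta$-cell is contained in some $V_i$, so $\diam A\le\diam V_i<r$, as required.

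The disjointification and the chain bookkeeping are routine; the only real content is the interplay of zero-dimensionality and compactness that yields a \emph{finite} clopen partition with a \emph{uniform} positive gap $\delta$. I expect this uniformity to be the main obstacle: total disconnectedness is a priori a pointwise, non-uniform condition, and the crux is to upgrade it to a single separation constant $\delta>0$ that one threshold $\theta$ can exploit simultaneously across all cells. Once that is in hand, the diameter bound is immediate, because a $\theta$-chain cannot bridge a gap wider than $\theta$.
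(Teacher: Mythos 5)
Your proof is correct, but it takes a genuinely different route from the paper's. The paper argues by contradiction: assuming for each $n$ a $\frac{1}{n}$-cell $A_n$ of diameter at least $r$, it extracts convergent subsequences $x_n\to x$, $y_n\to y$ with $d(x,y)\geq r$, separates $x$ from $y$ by a single clopen splitting $X=U\cup V$ (via~\cite[Lemma 7.11]{Nadler}), and notes that once $\frac{1}{n}$ drops below the gap between $U$ and $V$, no $\frac1n$-cell can meet both sets, contradicting the convergence $x_n\to x\in U$, $y_n\to y\in V$. You instead argue directly: you invoke the equivalence of total disconnectedness with zero-dimensionality (recorded in the paper's Definitions section), build a finite clopen partition $V_1,\dots,V_m$ of $X$ into pieces of diameter less than $r$, take $\theta$ strictly below the minimal gap $\delta$ between distinct pieces, and observe that a $\theta$-chain cannot bridge a gap of size at least $\delta>\theta$, so every $\theta$-cell lies in a single piece. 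Both proofs use compactness to manufacture a positive separation constant; the difference is that the paper only needs to separate two specific limit points by clopen sets and gets uniformity for free from the contradiction setup, whereas you upgrade pointwise zero-dimensionality to a full finite clopen partition and in exchange obtain an explicit, quantitative $\theta$ (anything below the partition gap) plus the slightly stronger conclusion that the $\theta$-cells refine your partition --- a reusable fact in the spirit of Lemma~\ref{lem:cells}. One pedantic remark: if your partition happens to have a single piece ($m=1$), the minimum defining $\delta$ runs over an empty index set; this is harmless, since then $\diam X<r$ and any $\theta>0$ works, but it merits a sentence.
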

\begin{proof}
Suppose to the contrary that there exists $r>0$ and for each $n\in\mathbb{N}$ a $\frac1n$-cell $A_n$ with diameter greater than or equal to $r$. Choose points $x_n, y_n \in A_n$ with $d(x_n, y_n)\geq r$. Using the compactness of $X$ and passing to subsequences if necessary we may assume that $x_n \to x$ and $y_n \to y$ in $X$ with $d(x,y)\geq r$. Since $X$ is totally disconnected we can write it as a union of two clopen sets $X=U\cup V$ with $x\in U$, $y\in V$, see~\cite[Lemma 7.11]{Nadler}. Let $\theta = \min \{ d(u,v) ~|~ u\in U, v\in V\}.$ By compactness of $U\times V$, we get $\theta>0$. Whenever $n$ is large enough that $\frac1n < \theta$, we see that either $A_n\cap U=\emptyset$ or $A_n\cap V=\emptyset$. This contradicts the fact that the points $x_n, y_n \in A_n$ are converging to the points $x\in U$, $y\in V$.
\end{proof}

Now we equip $\Cells(X)$ with the Hausdorff metric and consider its properties as a topological space.

\begin{lemma}\label{lem:compact}
If the compact metric space $(X,d)$ is totally disconnected, then $\Cells(X)$ is compact and each non-singleton cell is isolated in $\Cells(X)$.
\end{lemma}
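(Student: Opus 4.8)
The plan is to view $\Cells(X)$ as a subset of the compact hyperspace $2^X$ and to prove two things separately: that it is closed in $2^X$ (which gives compactness), and that its only possible accumulation points are singletons (which forces every non-singleton cell to be isolated). Throughout I would attach to each cell $A$ two numbers: the \emph{internal threshold} $c(A)=\inf\{\eta>0 : \text{every two points of } A \text{ are joined by an } \eta\text{-chain in } X\}$, and, when $A\neq X$, the \emph{external gap} $e(A)=\min\{d(a,z) : a\in A,\ z\in X\setminus A\}$. By Lemma~\ref{lem:cells}\ref{it:clopen} a non-singleton cell is clopen, so $X\setminus A$ is a nonempty clopen set lying at a positive distance from $A$ and $e(A)>0$; and because $X$ is totally disconnected, a non-singleton $A$ cannot be $\eta$-chain connected for every $\eta>0$ (otherwise its two points would share a quasicomponent, hence a component, hence be equal), so $c(A)>0$. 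A $\theta$-cell $C$ always satisfies $c(C)\le\theta<e(C)$: its points are $\theta$-connected, and no outside point can lie within $\theta$ of $C$ without merging into it. In particular every cell has $c(\cdot)<e(\cdot)$.

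For isolation I fix a non-singleton cell $A$ and take any cell $B\neq A$, splitting into the three cases of Lemma~\ref{lem:cells}\ref{it:contained}. If $A\cap B=\emptyset$ or $A\subsetneq B$, then $B$ contains a point $b\in X\setminus A$, whence $\dist(B,A)\ge\dist(b,A)\ge e(A)$ and so $\HausDist(A,B)\ge e(A)>c(A)$. The crucial case is $B\subsetneq A$, where I claim $\dist(A,B)\ge c(A)$. Suppose instead $\dist(A,B)=\lambda<c(A)$, so every point of $A$ lies within $\lambda$ of $B$. First note $c(B)<c(A)$: any chain from a point of $B$ to a point of the nonempty set $A\setminus B$ must take a step crossing out of the cell $B$, of length at least $e(B)$, so $c(A)\ge e(B)>c(B)$. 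Now pick $\eta'$ with $\max\{\lambda,c(B)\}<\eta'<c(A)$. Given $x,y\in A$, choose $b_x,b_y\in B$ with $d(x,b_x),d(y,b_y)\le\lambda$; concatenating $x$, an $\eta'$-chain from $b_x$ to $b_y$ (which exists since $\eta'>c(B)$), and $y$ gives an $\eta'$-chain from $x$ to $y$. Thus $A$ would be $\eta'$-chain connected with $\eta'<c(A)$, contradicting the definition of $c(A)$. Hence $\dist(A,B)\ge c(A)$, so $\HausDist(A,B)\ge c(A)$ in every case, and the open Hausdorff-ball of radius $c(A)$ about $A$ contains no cell but $A$ itself.

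For compactness it suffices to show $\Cells(X)$ is closed in $2^X$. The key estimate is that \emph{for each $r>0$ only finitely many cells have diameter at least $r$.} Indeed, by Lemma~\ref{lem:diam} I choose $\theta>0$ so that every $\theta$-cell has diameter less than $r$; then by Lemma~\ref{lem:cells}\ref{it:union} any cell of diameter $\ge r$ must be an $\eta$-cell with $\eta>\theta$, hence a union of some of the finitely many $\theta$-cells, so there are at most $2^{N}$ of them, where $N$ is the number of $\theta$-cells. Now let $A$ be an accumulation point of $\Cells(X)$; it is the limit of a sequence of pairwise distinct cells $A_n$. If $\diam(A)>0$ then, since $|\diam(A_n)-\diam(A)|\le 2\,\HausDist(A_n,A)\to 0$, all but finitely many $A_n$ would have diameter at least $\tfrac12\diam(A)$, which is impossible because those cells are distinct yet finite in number. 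So $\diam(A)=0$, i.e.\ $A$ is a singleton, and singletons are exactly the $0$-cells by Lemma~\ref{lem:cells}\ref{it:0-cells}, so $A\in\Cells(X)$. Therefore $\Cells(X)$ contains all of its accumulation points and is compact.

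I expect the main obstacle to be the contained-in case of the isolation argument: unlike the disjoint and containing cases, it has no uniform lower bound coming directly from a single separation number, and it is the chain argument—together with the auxiliary inequality $c(B)<c(A)$ obtained from the crossing step—that supplies one. The finiteness-of-large-cells estimate is the other load-bearing step, since it is precisely what rules out a non-singleton accumulation point and thereby simultaneously yields closedness and confirms that accumulation can occur only at singletons.
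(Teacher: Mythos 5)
Your proof is correct, and it splits into two halves of which only one departs from the paper. The compactness half is essentially the paper's own argument: the paper likewise reduces to closedness in $2^X$ and disposes of a sequence of pairwise distinct cells by noting, via Lemma~\ref{lem:cells}~\ref{it:finite}, \ref{it:union} and Lemma~\ref{lem:diam}, that their diameters must tend to zero, so the limit is a singleton and hence a $0$-cell; your ``at most $2^N$ cells of diameter $\geq r$'' estimate is exactly the detail the paper leaves implicit, and the paper also reads off isolation from this same sequence argument (``This also shows that non-singleton cells are isolated''), as you yourself observe at the end. What is genuinely different is your isolation half: the paper never introduces the internal threshold $c(A)$ or the external gap $e(A)$, and its isolation claim is purely qualitative, whereas your three-case analysis (disjoint, ancestor, descendant), with the chain-splicing contradiction in the case $B\subsetneq A$, produces an explicit isolation radius --- the Hausdorff ball of radius $c(A)$ about a non-singleton cell $A$ meets $\Cells(X)$ in no cell but $A$. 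That quantitative refinement is more than the lemma asks for, but it is self-contained and would be useful wherever an effective separation bound is wanted.

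One detail to patch in the quantitative half: the blanket claim that every cell has $c(\cdot)<e(\cdot)$, and the inference $c(A)\geq e(B)>c(B)$ in the case $B\subsetneq A$, both fail when $B$ is a singleton $\{x\}$ with $x$ not isolated in $X$. Such a cell is only a $0$-cell, the set $X\setminus B$ is not closed (so the minimum defining $e(B)$ is not attained), and in fact $e(B)=0=c(B)$. This does not break your argument, because for any singleton $B$ one has $c(B)=0<c(A)$ directly, which is all that your choice of $\eta'$ requires; but the inequality $c(\cdot)<e(\cdot)$ should be asserted only for clopen cells (equivalently, non-singleton cells and isolated singletons, using Lemma~\ref{lem:cells}~\ref{it:clopen}), with singleton descendants handled by the trivial observation above.
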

\begin{proof}
To show compactness it suffices to show that $\Cells(X)$ is closed in $2^X$. Let $(C_n)$ be a sequence of cells converging in the Hausdorff metric to a set $K\in 2^X$. If some cell $C$ appears in the sequence $(C_n)$ infinitely often, then $K=C\in\Cells(X)$. If infinitely many members of $(C_n)$ are distinct, then by Lemma~\ref{lem:cells}~\ref{it:finite}, \ref{it:union}, and Lemma~\ref{lem:diam}, $\diam(C_n) \to 0$ and $K$ is a singleton. Then also $K\in\Cells(X)$. This also shows that non-singleton cells are isolated.
\end{proof}

\begin{remark}
Lemma~\ref{lem:compact} shows that $\Cells(X)$ has similar properties as $E(Y)\cup B(Y)$ when $Y$ is a dendrite with a closed endpoint set, see Proposition~\ref{prop:char}. The singleton cells correspond with endpoints and the non-singleton cells correspond with branchpoints.
\end{remark}

\subsection{The ``skeleton'' of the dendrite}\label{sec:skeleton}\strut\\
The cells of a compact metric space $(X,d)$ are partially ordered by inclusion. If $A\subset B$ is a proper subcell, then we call $A$ a \emph{descendent} of $B$ and $B$ an \emph{ancestor} of $A$. If $A\subset B$ is a maximal proper subcell, then we call $A$ a \emph{child} of $B$ and $B$ the \emph{parent} of $A$.

We can get a ``skeleton'' for the dendrite we wish to construct by forming a graph with $\Cells(X)$ as its vertex set and with edges joining each cell to its children. Two possibilities for this graph are shown in Figure~\ref{fig:graph}.

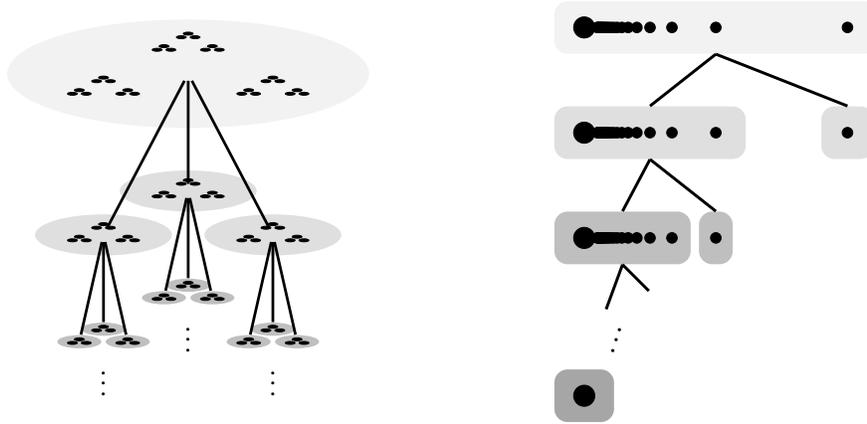
\begin{figure}[htb!!]
\begin{minipage}[c]{.45\textwidth}\centering
\scalebox{0.65}{
\begin{tikzpicture}[
declare function={
fx(\x,\y,\i) = (\i==1)*(2*cos(150)+1/3.5*(\x)) + (\i==2)*(2*cos(270)+1/3.5*(\x)) + (\i==3)*(2*cos(30)+1/3.5*(\x));
fy(\x,\y,\i) = (\i==1)*(2*sin(150)+1/3.5*(\y)) + (\i==2)*(2*sin(270)+1/3.5*(\y)) + (\i==3)*(2*sin(30)+1/3.5*(\y));
}
]
%Top Layer
\path [fill=gray!10, yscale=-0.3] (0,-20) circle [radius=3.7] node (root) {};
\foreach \i in {1,...,3} {
 \foreach \j in {1,...,3} {
  \foreach \k in {1,...,3} {
   \draw [fill, yscale=-0.3] ({fx(fx(fx(0,0,\i),fy(0,0,\i),\j),fy(fx(0,0,\i),fy(0,0,\i),\j),\k)},{-20+fy(fy(fx(0,0,\i),fy(0,0,\i),\j),fy(fx(0,0,\i),fy(0,0,\i),\j),\k)}) circle [radius=0.1];
}}}
%Middle Layer
\foreach \i in {1,...,3} {
 \path [fill=gray!25, yscale=-0.3] ({fx(0,0,\i)},{-10+fy(0,0,\i)}) circle [radius=1.4] node (\i) {};
}
\foreach \i in {1,...,3} {
 \foreach \j in {1,...,3} {
  \foreach \k in {1,...,3} {
   \draw [fill, yscale=-0.3] ({fx(fx(fx(0,0,\i),fy(0,0,\i),\j),fy(fx(0,0,\i),fy(0,0,\i),\j),\k)},{-10+fy(fy(fx(0,0,\i),fy(0,0,\i),\j),fy(fx(0,0,\i),fy(0,0,\i),\j),\k)}) circle [radius=0.1];
}}}
%Bottom Layer
\foreach \i in {1,...,3} {
 \foreach \j in {1,...,3} {
  \path [fill=gray!50, yscale=-0.3] ({fx(fx(0,0,\i),fy(0,0,\i),\j)},{-3+fy(fx(0,0,\i),fy(0,0,\i),\j)}) circle [radius=0.45] node (\i\j) {};
}}
\foreach \i in {1,...,3} {
 \foreach \j in {1,...,3} {
  \foreach \k in {1,...,3} {
   \draw [fill, yscale=-0.3] ({fx(fx(fx(0,0,\i),fy(0,0,\i),\j),fy(fx(0,0,\i),fy(0,0,\i),\j),\k)},{-3+fy(fy(fx(0,0,\i),fy(0,0,\i),\j),fy(fx(0,0,\i),fy(0,0,\i),\j),\k)}) circle [radius=0.1];
}}}
%Connecting Edges
\foreach \i in {1,...,3} {\draw [ultra thick] (root) -- (\i);}
\foreach \i in {1,...,3} { \foreach \j in {1,...,3} {\draw [ultra thick] (\i) -- (\j\i);}}
\foreach \i in {1,...,3} {\path (\i) -- +(0,-3) node () {\rotatebox{90}{\textbf{\ldots}}};}
\end{tikzpicture}
}%end scalebox
\end{minipage}%
\begin{minipage}[c]{.45\textwidth}\centering
\scalebox{0.7}{
\begin{tikzpicture}
\foreach \i in {1,...,20} {\draw [fill] (5/\i,0) circle [radius=0.1] node (\i) {};}
\draw [fill] (0,0) circle [radius=0.2] node (0) {};
\foreach \i in {1,...,20} {\draw [fill] (5/\i,-2) circle [radius=0.1] node (\i-1) {};}
\draw [fill] (0,-2) circle [radius=0.2] node (0-1) {};
\foreach \i in {2,...,20} {\draw [fill] (5/\i,-4) circle [radius=0.1] node (\i-2) {};}
\draw [fill] (0,-4) circle [radius=0.2] node (0-2) {};
\draw [fill] (0, -7) circle [radius=0.2] node (0-4) {};
\begin{pgfonlayer}{background}
  \node[fit=(0)(1), rounded corners=0.25cm, inner xsep=12pt, minimum height=1cm, fill=gray!10] (A) {};
  \node[fit=(0-1)(2-1), rounded corners=0.25cm, inner xsep=12pt, minimum height=1cm, fill=gray!25] (B) {};
  \node[fit=(1-1), rounded corners=0.25cm, inner xsep=10pt, minimum height=1cm, fill=gray!25] (C) {};
  \node[fit=(0-2)(3-2), rounded corners=0.25cm, inner xsep=9pt, xshift=-3pt, minimum height=1cm, fill=gray!50] (D) {};
  \node[fit=(2-2), rounded corners=0.25cm, inner xsep=5pt, minimum height=1cm, fill=gray!50] (E) {};
  \node[fit=(0-4), rounded corners=0.25cm, inner xsep=12pt, minimum height=1cm, fill=gray!70] (G) {};
\end{pgfonlayer}
\draw [ultra thick] (A.south) -- (B.north);
\draw [ultra thick] (A.south) -- (C.north);
\draw [ultra thick] (B.south) -- (D.north);
\draw [ultra thick] (B.south) -- (E.north);
\node at ($(D.south)!.5!(G.north)$) (F) {};
\node at ($(D.south)!.7!(G.north east)$) () {\rotatebox{72}{\textbf{\ldots}}};
\draw [ultra thick] (D.south) -- (F);
\draw [ultra thick] (D.south) -- ++(.5,-.5);
\end{tikzpicture}
}%end scalebox
\end{minipage}
\caption{Parent/child relations among the cells of $X$ (cf. Figure~\ref{fig:decomposition}).}
\label{fig:graph}
\end{figure}

\begin{lemma}\label{lem:graph}
Let $(X,d)$ be a totally disconnected compact metric space and let $A\in\Cells(X)$.
\begin{enumerate}[label=(\roman*)]
\item\label{it:nonsingleton} If $A$ is not a singleton, then $A$ has finitely many children. They are clopen sets in $X$ and their union is $A$.
\item\label{it:clopencell} If $A$ is clopen in $X$ and $A\neq X$, then $A$ has exactly one parent cell and finitely many ancestors.
\item\label{it:limitcell} If $A=\{x\}$ with $x$ not isolated in $X$, then $A$ has no parent cell and its ancestors can be arranged in a decreasing sequence $X=C_1 \supset C_2 \supset \cdots$ with $C_n \to A$ in the Hausdorff metric as $n\to\infty$.
\end{enumerate}
\end{lemma}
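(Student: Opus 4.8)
The plan is to treat the three parts by analysing how the partition of $X$ into $\theta$-cells evolves as $\theta$ varies, using the monotonicity and finiteness recorded in Lemma~\ref{lem:cells} together with the compactness of $\Cells(X)$ from Lemma~\ref{lem:compact}.

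For part~\ref{it:nonsingleton}, let $A$ be a non-singleton $\theta$-cell. I would first locate the scale at which $A$ becomes chain-connected: set $I=\{\eta>0 : A \text{ is a single } \eta\text{-cell}\}$ and $s=\inf I$. Lemma~\ref{lem:diam} gives $s>0$ (small cells cannot span $A$, whose diameter is positive), and a short argument using that $\sim_\eta$ coarsens as $\eta$ grows shows $I$ is an interval. The children should be the $\eta$-cells into which $A$ decomposes for $\eta$ just below $s$: since by Lemma~\ref{lem:cells}\ref{it:union} and \ref{it:finite} this decomposition is a finite partition that only coarsens as $\eta\uparrow s$, it changes at finitely many scales and is therefore constant on some interval $(\sigma,s)$; I take $\{A_1,\dots,A_k\}$ to be that stable partition, which is clopen by Lemma~\ref{lem:cells}\ref{it:clopen}, finite, and has union $A$, with $k\ge 2$ because $A$ is not a single $\eta$-cell below $s$. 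The main obstacle is maximality: ruling out a cell $C$ with $A_i\subsetneq C\subsetneq A$. For this I would prove the critical scale is attained, i.e. $A$ is a single $s$-cell. Assuming instead that $A$ splits into $m\ge2$ $s$-cells, the finitely many $s$-cells of $X$ (Lemma~\ref{lem:cells}\ref{it:finite}) have pairwise distances strictly exceeding $s$, so their minimum $g^\ast$ is $>s$; then for $\eta\in(s,g^\ast)$ no two $s$-cells chain together and $A$ fails to be a single $\eta$-cell, contradicting $(s,\theta]\subseteq I$. Once $A$ is known to be a single $s$-cell, any $C\subsetneq A$ is a $\theta'$-cell with $\theta'<s$ by Lemma~\ref{lem:cells}\ref{it:greater}, while $C\supsetneq A_i$ would force $\theta'\ge s$ by the same lemma; hence $C=A_i$ and the $A_i$ are exactly the children.

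For part~\ref{it:clopencell}, let $A$ be clopen with $A\neq X$. Every supercell of $A$ contains $A$ and hence is not disjoint from it, so by Lemma~\ref{lem:cells}\ref{it:contained} the ancestors of $A$ form a chain, and this chain is nonempty since $X$ itself is a cell containing $A$. To see the chain is finite I would argue by contradiction using Lemma~\ref{lem:compact}: an infinite chain of ancestors contains a monotone sequence, whose Hausdorff limit (the closure of the union, respectively the intersection) is again a cell, and that limit cell is non-singleton---or is the clopen singleton $A$, which is isolated because $x$ is then isolated in $X$---so it is isolated in $\Cells(X)$ by Lemma~\ref{lem:compact}, contradicting that distinct ancestors converge to it. A finite nonempty chain has a least element, which is the unique parent.

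For part~\ref{it:limitcell}, let $A=\{x\}$ with $x$ not isolated. Each proper supercell is the $\theta'$-cell of $x$ for some $\theta'>0$, and since $x$ is not isolated the $\eta$-cell $B_\eta$ of $x$ satisfies $B_\eta\supsetneq\{x\}$ for every $\eta>0$ while $\diam(B_\eta)\to0$ as $\eta\to0$ by Lemma~\ref{lem:diam}; given any supercell $B=B_{\theta'}$, choosing $\eta<\theta'$ small yields a strictly smaller supercell $B_\eta\subsetneq B$, so $A$ has no minimal ancestor and hence no parent. For the enumeration I would use that each $B_{1/n}$ is a clopen non-singleton cell different from $X$, so by part~\ref{it:clopencell} it has only finitely many ancestors; since every ancestor of $\{x\}$ contains some $B_{1/n}$, the ancestors form a chain in which every element has only finitely many elements above it, with maximum $X$ and no minimum, hence of order type $\omega^\ast$. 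Enumerating it as $X=C_1\supset C_2\supset\cdots$ and using that the $B_{1/n}$ are cofinal below with $\diam(B_{1/n})\to0$ gives $\bigcap_n C_n=\{x\}$ and $C_n\to\{x\}$ in the Hausdorff metric.
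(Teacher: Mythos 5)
Your proof is correct, but it follows a genuinely different route from the paper's in all three parts. For~\ref{it:nonsingleton}, the paper never isolates a critical scale: it uses total disconnectedness directly to split $A=U\cup V$ into two nonempty compact pieces, notes that the minimal distance $\eta$ between $U$ and $V$ satisfies $0<\eta<\theta$ and that no $\eta$-chain in $X$ can join $U$ to $V$ (points of $A$ lie at distance $>\theta$ from $X\setminus A$), so $A$ is a union of at least two $\eta$-cells, and then takes the children to be the maximal proper subcells assembled from those finitely many $\eta$-cells. Your critical-scale argument ($s=\inf I$, attainment of the infimum via the gap $g^{\ast}>s$ between $s$-cells, stabilization of the partition on $(\sigma,s)$) is longer but buys a sharper picture: the children are exactly the cells at any scale just below $s$, and maximality falls out of Lemma~\ref{lem:cells}~\ref{it:greater}; to get ``exactly'' you still need the one-line observation that any child, having scale $<s$, lies in some $A_i$ by Lemma~\ref{lem:cells}~\ref{it:contained} and hence equals it by maximality. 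For~\ref{it:clopencell}, the paper's argument is a short count---every ancestor is a finite union of $\theta$-cells (Lemma~\ref{lem:cells}~\ref{it:union}) and there are only finitely many $\theta$-cells (Lemma~\ref{lem:cells}~\ref{it:finite})---whereas you invoke compactness of $\Cells(X)$, monotone sequences, and isolation of the limit cell; your version works, but it requires the supplement you correctly supply (a clopen singleton is isolated in $\Cells(X)$, which is not literally part of Lemma~\ref{lem:compact}), so the paper's counting argument is both more elementary and self-contained. For~\ref{it:limitcell}, the paper constructs the chain top-down, taking $C_{n+1}$ to be the child of $C_n$ containing $x$ (via part~\ref{it:nonsingleton}) and deducing $C_n\to\{x\}$ from Lemma~\ref{lem:compact}, leaving implicit both that this sequence exhausts all ancestors and that no parent exists; you work bottom-up from the cells $B_\eta\ni x$, which makes the non-existence of a parent and the $\omega^{\ast}$ order type of the full ancestor chain explicit---arguably a more complete verification of the statement as worded. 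One small slip there: $B_{1/n}$ can equal $X$ for small $n$ (e.g.\ for the convergent sequence of Figure~\ref{fig:decomposition}), so ``each $B_{1/n}$ is \ldots different from $X$'' should read ``for all $n$ large enough,'' which suffices because $\diam(B_{1/n})\to 0<\diam(X)$.
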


\begin{proof}
\emph{\ref{it:nonsingleton}:}
We know that $A$ is a $\theta$-cell for some $\theta>0$. Since $X$ is totally disconnected, we can write $A=U \cup V$ as the disjoint union of two closed, nonempty subsets. Since $U,V$ are closed in $A$ and $A$ is closed in $X$, we get that $U, V$ are compact. Then
$\eta := \inf \{ d(u,v) ~|~ u\in U, v\in V \}$
is positive. Fix $u\in U$, $v\in V$. Then $u, v$ are connected by a $\theta$-chain in $A$, but cannot be connected by an $\eta$-chain in $A$. Therefore $\eta<\theta$. Since $A$ is a $\theta$-cell, $d(a,x)>\theta>\eta$ for all $a\in A$, $x\in X\setminus A$. Therefore $u, v$ cannot be connected by an $\eta$-chain even within the larger space $X$, so $u\not\sim_\eta v$ in $X$. It follows that $A$ is not an $\eta$-cell. By Lemma~\ref{lem:cells}~\ref{it:union}, $A$ is a finite union of $\eta$-cells. Using those $\eta$-cells we can take unions, and the maximal unions which are in $\Cells(X)$ but are not equal to $A$ are precisely the children of $A$.

\emph{\ref{it:clopencell}:}
Even if $A$ is a singleton, the clopen condition implies that $A$ is a $\theta$-cell for some $\theta>0$. The ancestors of $A$ are all finite unions of $\theta$-cells by Lemma~\ref{lem:cells}~\ref{it:union}, so by Lemma~\ref{lem:cells}~\ref{it:finite} there are only finitely many of them. By Lemma~\ref{lem:cells}~\ref{it:contained}, the ancestors of $A$ are totally ordered by inclusion. The smallest ancestor cell is then the (unique) parent of $A$.

\emph{\ref{it:limitcell}:}
Let $C_1=X$. Once a non-singleton ancestor $C_n$ has been defined, let $C_{n+1}$ be the child of $C_n$ containing $x$. Then $C_{n+1}$ is clopen in $X$, so it is not equal to $\{x\}$. This allows us to continue the induction. In this way we get a decreasing sequence $C_1 \supset C_2 \supset \cdots$ of ancestors of $\{x\}$. By Lemma~\ref{lem:compact}, $C_n \to \{x\}$ in the space $\Cells(X)$ under the Hausdorff metric.
\end{proof}

\subsection{Construction of the dendrite $(Y,\rho)$}\label{sec:Yrho}\strut\\
Again, let $(X,d)$ be a totally disconnected compact metric space. We create the space $Y$ from the ``skeleton'' graph described above be replacing each edge with an arc. At the set-theoretic level we can accomplish this by taking
\begin{equation*}
Y = \Cells(X) \cup \{ (A, B, t) ~|~ A, B\in \Cells(X), t\in[0,1], B \text{ is a child of }A \}
\end{equation*}
and identifying $(A,B,0)$ with $A$ and $(A,B,1)$ with $B$. The set of points $(A,B,t)$, $0\leq t\leq 1$ will be denoted $[A, B]$ (interval notation) and will be called an \emph{edge} of $Y$.

Now we introduce a metric $\rho$ on $Y$. For $A, B\in\Cells(X)$ we define
\begin{equation}\label{rho_cell_cell}
\rho(A,B) = \HausDist(A,B)
\end{equation}
where the Hausdorff distance is measured between $A$ and $B$ as closed subsets of $X$. On the edge $[A,B]$ if $y=(A,B,t)$ and $y'=(A,B,t')$ then we put
\begin{equation}\label{rho_edge}
\rho(y,y')=|t-t'| / \HausDist(A,B),
\end{equation}
so that each edge is an isometric copy of a Euclidean interval of the right length.  If $a\in[A_1,A_2]$ and $B\in\Cells(X)$ then we put
\begin{equation}\label{rho_point_cell}
\rho(a,B) = \min_{i\in\{1,2\}} \rho(a,A_i) + \rho(A_i,B).
\end{equation}
Finally, if $a, b$ belong to distinct edges $[A_1, A_2]$, $[B_1, B_2]$, respectively, then we put
\begin{equation}\label{rho_point_point}
\rho(a,b) = \min_{i,j\in\{1,2\}} \rho(a,A_i) + \rho(A_i,B_j) + \rho(B_j, b).
\end{equation}
In other words, we look for the shortest path moving first along the edges to their endpoints, and then jumping between endpoints.

\begin{lemma}\label{lem:rho}
$\rho$ is a metric on the set $Y$.
\end{lemma}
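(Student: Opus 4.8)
The plan is to check the metric axioms in increasing order of difficulty, reducing everything to two facts already available: that $\HausDist$ is a genuine metric on the hyperspace $2^X$ (so it is symmetric and satisfies the triangle inequality), and that each edge $[A,B]$ is, by \eqref{rho_edge}, an isometric copy of a real interval. Symmetry of $\rho$ is immediate, since each of \eqref{rho_cell_cell}--\eqref{rho_point_point} is visibly symmetric in its two arguments (the factor $|t-t'|$, the symmetry of $\HausDist$, and the symmetric minima). Non-negativity is equally clear. For definiteness I would argue by cases: two distinct vertices are separated because $\HausDist$ is a metric; two distinct points of one edge because the within-edge term is positive off the diagonal; and if $a$ is an interior edge point then $\rho(a,A_i)>0$ at both endpoints, so every summand in \eqref{rho_point_cell}/\eqref{rho_point_point} cannot vanish, whence a zero distance forces $a$ to coincide with an endpoint and returns us to the vertex case.

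Before the triangle inequality I would record one structural identity that the construction is engineered to produce, and which is the true crux of the lemma: \emph{each edge is isometric to the interval whose endpoints sit at distance $\rho(A,B)=\HausDist(A,B)$}. Concretely, a point $y=(A,B,t)$ divides its edge in the ratio $t:(1-t)$, so that
\[
\rho(y,A)+\rho(y,B)=\rho(A,B)=\HausDist(A,B),
\]
with $\HausDist(A,B)>0$ because a parent and a child are distinct closed sets. The force of this identity is that it makes detours consistent: walking out to one endpoint of an edge and then jumping to the other endpoint costs exactly what walking straight across costs, so the value assigned by \eqref{rho_edge} to two points of the same edge is never undercut by a route through the endpoints. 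From here I would extract the workhorse estimate: for any point $a\in Y$ and any two vertices $P,Q$,
\[
\rho(a,Q)\le\rho(a,P)+\HausDist(P,Q).
\]
When $a$ is a vertex this is the triangle inequality for $\HausDist$; when $a$ is interior it follows by inserting $\HausDist(A_i,Q)\le\HausDist(A_i,P)+\HausDist(P,Q)$ into the minimum \eqref{rho_point_cell} and observing that $\min_i\bigl(\rho(a,A_i)+\HausDist(A_i,P)\bigr)$ is exactly $\rho(a,P)$. In words, the distance from a fixed point to a varying vertex is $1$-Lipschitz in the Hausdorff metric.

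The substance is the triangle inequality $\rho(a,c)\le\rho(a,b)+\rho(b,c)$, which I would prove by cases on the middle point $b$. The central case is $b$ a vertex $P$: expanding $\rho(a,c)$ through its defining minimum \eqref{rho_point_point} and collapsing the two Hausdorff jumps via $\HausDist(A_i,P)+\HausDist(P,C_j)\ge\HausDist(A_i,C_j)$ exhibits the route through $P$ as one of the admissible routes competing in that minimum, hence no shorter than $\rho(a,c)$; the subcase where $a$ and $c$ share an edge is handled separately by combining the length identity above with the one-dimensional triangle inequality on the interval. For $b$ an interior edge point I would first establish the factorisation
\[
\rho(x,b)=\min_{j\in\{1,2\}}\rho(x,B_j)+\rho(B_j,b),
\]
valid for every $x$ off the edge of $b$, by substituting \eqref{rho_point_cell} into \eqref{rho_point_point}; this says an optimal route to an interior point always passes through one of its two endpoints. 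Applying the factorisation to both $\rho(a,b)$ and $\rho(b,c)$ reduces the claim to the already-settled vertex case, the only bookkeeping being that when the two selected endpoints differ one reassembles a legitimate jump from $B_1$ to $B_2$ using $\rho(B_1,b)+\rho(B_2,b)=\HausDist(B_1,B_2)$.

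The main obstacle is precisely the consistency flagged in the second paragraph: one must guarantee that the direct value \eqref{rho_edge} along an edge is never exceeded by a cheaper detour out through a vertex and back, for otherwise the triangle inequality would fail and \eqref{rho_edge} would not record the true distance. This is exactly why the edge length is tied to $\HausDist(A,B)$; once that identity is secured, every remaining case is a mechanical combination of the Hausdorff triangle inequality (for the vertex-to-vertex jumps) and the triangle inequality on a Euclidean interval (for the within-edge motion), with the minima in \eqref{rho_point_cell}--\eqref{rho_point_point} doing the work of selecting the shortest admissible route.
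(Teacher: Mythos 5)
Your proposal is correct and follows essentially the same route as the paper's proof: collapse the two within-edge segments at the middle point using the fact that each edge is an isometric copy of an interval, collapse the vertex-to-vertex jumps with the triangle inequality for the Hausdorff metric, and recognize the resulting path as one of the admissible routes competing in the defining minimum. The difference is only organizational --- your factorization lemma, Lipschitz estimate, and case analysis on the middle point systematically cover the degenerate cases (points sharing an edge, vertices as middle points) that the paper treats in a single generic chain and otherwise leaves to the reader.
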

\begin{proof}
It is easy to see that $\rho$ is symmetric and positive definite. To verify the triangle inequality, suppose that $a\in[A_1,A_2]$, $b\in[B_1,B_2]$, $c\in[C_1,C_2]$ are points from three distinct edges. Choosing the indices $i,j,k,l\in\{1,2\}$ to minimize the first sum we get
\begin{equation}\label{triangle}
\begin{aligned}
\rho(a,b)+\rho(b,c)
&= \rho(a,A_i) + \rho(A_i,B_j) + \underbrace{\rho(B_j,b) + \rho(b,B_k)}_{\mathclap{\text{Use triangle ineq. in }[B_1,B_2]}} + \rho(B_k,C_l) + \rho(C_l,c) \\
&\geq \rho(a,A_i) + \underbrace{\rho(A_i,B_j) + \rho(B_j,B_k) + \rho(B_k,C_l)}_{\text{Use triangle ineq. for Hausdorff distance}} + \rho(C_l,c) \\
&\geq \rho(a,A_i) + \rho(A_i,C_l) + \rho(C_l,c) \\
& \geq \rho(a,c).
\end{aligned}
\end{equation}
There are also cases when some of the points $a,b,c$ belong to the same edge, or when some of these points are cells of $X$. We leave them to the reader.
\end{proof}

\begin{lemma}\label{lem:freearc}
Each edge $[A,B]$ in $Y$ is a free arc in $Y$.
\end{lemma}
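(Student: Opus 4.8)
The plan is to verify separately the two requirements in the definition of a free arc: that $[A,B]$ is an arc, and that the complementary open arc $(A,B)=\{(A,B,t)\mid 0<t<1\}$ is open in $Y$.

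For the first requirement, consider the parametrization $\phi\colon[0,1]\to Y$, $\phi(t)=(A,B,t)$. By~\eqref{rho_edge}, for all $t,t'\in[0,1]$ the value $\rho(\phi(t),\phi(t'))$ is a fixed positive multiple of $|t-t'|$; positivity uses that $B$, being a child of $A$, is a proper subcell, so that $A\neq B$ and $\HausDist(A,B)>0$. Thus $\phi$ is a bijection of $[0,1]$ onto $[A,B]$ carrying the Euclidean metric to a scalar multiple of itself, and in particular the subspace metric that $[A,B]$ inherits from $(Y,\rho)$ is exactly this scaled Euclidean metric. Hence $\phi$ is a homeomorphism and $[A,B]$ is an arc with endpoints $A=\phi(0)$ and $B=\phi(1)$.

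The substantive point is the second requirement, and here the idea is that $\rho$ is built as a path metric along the skeleton, so that any route from an interior point of $[A,B]$ to a point off the edge is forced to pass through one of the endpoints $A$ or $B$. Fix an interior point $y=(A,B,t)$, $0<t<1$, and write the endpoints of its edge as $A_1=A$, $A_2=B$. By~\eqref{rho_edge} both $\rho(y,A)$ and $\rho(y,B)$ are positive, so $\delta:=\min\{\rho(y,A),\rho(y,B)\}>0$. Now let $b\in Y\setminus[A,B]$. If $b$ is a cell, then~\eqref{rho_point_cell} gives $\rho(y,b)=\min_i\big(\rho(y,A_i)+\rho(A_i,b)\big)\geq\min_i\rho(y,A_i)=\delta$, since each $\rho(A_i,b)\geq0$. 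If instead $b$ lies on a different edge, then~\eqref{rho_point_point} yields the same bound $\rho(y,b)\geq\min_i\rho(y,A_i)=\delta$ because every summand is nonnegative. Hence
$$\rho(y,b)\;\geq\;\delta\qquad\text{for every }b\in Y\setminus[A,B].$$

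It follows that the open ball $B_Y(y,\delta)$ meets no point of $Y\setminus[A,B]$; and since $\rho(y,A)\geq\delta$ and $\rho(y,B)\geq\delta$, this ball also avoids the two endpoints, so $B_Y(y,\delta)\subseteq(A,B)$. As $y$ was arbitrary, $(A,B)$ is open in $Y$, and combined with the first step this shows that $[A,B]$ is a free arc. I expect the only delicate bookkeeping to lie in the lower bound on $\rho(y,b)$ --- in particular in confirming that it survives when $b$ sits on an edge sharing an endpoint with $[A,B]$ --- but this is immediate from the nonnegativity of the terms in~\eqref{rho_point_point}.
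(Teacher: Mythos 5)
Your proof is correct and follows essentially the same approach as the paper: the paper's (much terser) proof also takes an interior point $a$ and observes that the ball of radius $r=\min\{\rho(a,A),\rho(a,B)\}$ lies inside $[A,B]\setminus\{A,B\}$ ``by the definition of the metric $\rho$,'' which is exactly the case-check of~\eqref{rho_point_cell} and~\eqref{rho_point_point} that you carry out explicitly. Your additional verification that $[A,B]$ is genuinely an arc (via the scaled-Euclidean parametrization) is a detail the paper leaves implicit in the construction of Section~\ref{sec:Yrho}, and it is handled correctly.
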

\begin{proof}
We must show that $[A,B]\setminus\{A,B\}$ is open in $Y$. If $a\in[A,B]\setminus\{A,B\}$, then the ball $B_Y(a,r)$ where $r=\min\{\rho(a,A),\rho(a,B)\}$ is contained in $[A,B]\setminus\{A,B\}$ by the definition of the metric $\rho$.
\end{proof}

\subsection{Verification that $Y$ is a dendrite}\label{sec:proof}\strut\\
We are now ready to state and prove the main theorem of this section.

\begin{thma}
Let $(X,d)$ be a totally disconnected compact metric space. Then there exists a dendrite $(Y, \rho)$ whose endpoint set $E(Y)$ is isometric to $X$.
\end{thma}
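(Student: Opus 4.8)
The plan is to verify the four properties that define a dendrite in the sense used here---$Y$ must be a compact, connected, locally connected metric space containing no simple closed curve---and only afterwards to identify its endpoint set. Throughout I will use that $\rho$ restricts to the Hausdorff metric on $\Cells(X)$ by~\eqref{rho_cell_cell}, so that $x\mapsto\{x\}$ is an isometric embedding of $X$ into $(\Cells(X),\rho)\subset Y$; the real content is that its image is exactly $E(Y)$. The parent/child relation of Lemma~\ref{lem:graph} organizes the clopen cells into a tree rooted at $X=C_1$, each edge $[A,B]$ being an isometric arc of length $\HausDist(A,B)\le\diam(A)$ by Lemma~\ref{lem:freearc} and~\eqref{rho_edge}. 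Write $Z$ for the union of the clopen cells with all edges; since every edge joins clopen cells and every clopen cell connects up to the root through its finitely many ancestors (Lemma~\ref{lem:graph}~\ref{it:clopencell}), $Z$ is path-connected. For connectedness I would show $Y=\overline Z$: the only points outside $Z$ are the singletons $\{x\}$ with $x$ non-isolated, and each is the Hausdorff limit of its ancestor chain $C_n\to\{x\}$ (Lemma~\ref{lem:graph}~\ref{it:limitcell}), hence lies in $\overline Z$. For compactness I would argue sequentially, the key point being that an edge is no longer than the diameter of its parent cell, so by Lemma~\ref{lem:diam} and Lemma~\ref{lem:cells}~\ref{it:finite},~\ref{it:union} only finitely many edges exceed any given length. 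A sequence then either has infinitely many terms on one edge (a compact arc) or lies on pairwise distinct edges whose lengths tend to $0$, in which case it converges to a limit of the endpoints $A_n$ taken in the compact space $\Cells(X)$ (Lemma~\ref{lem:compact}).

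Local connectedness I would obtain via Property~S: given $\varepsilon>0$, choose $\theta$ by Lemma~\ref{lem:diam} so that every $\theta$-cell has diameter $<\varepsilon$. The finitely many proper ancestors of the $\theta$-cells and the finitely many edges among them form a finite subcomplex, which I cover by chopping each arc into short pieces; the subtrees $Y_A$ hanging below the individual $\theta$-cells $A$ are connected (as for $Z$) and have $\rho$-diameter at most $3\varepsilon$, since any two cells inside such a subtree are subsets of $A$ and so at Hausdorff distance $<\varepsilon$, while each edge below $A$ is shorter than $\varepsilon$. For the absence of simple closed curves I would show that no interior point $p$ of an edge $[A,B]$ can lie on a circle: removing $p$ splits $Y$ into the subtree below $B$ (with the lower half-edge) and its complement, and the complement is closed because only finitely many cells lie above $A$ (Lemma~\ref{lem:graph}~\ref{it:clopencell}); as a circle minus a point is connected yet meets both sides of $p$, this is impossible. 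Any circle would therefore lie in $\Cells(X)$, which is totally disconnected---its non-singleton points are isolated by Lemma~\ref{lem:compact}, and its singletons form a copy of the totally disconnected $X$---so it degenerates to a point.

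Finally I would identify $E(Y)$ through the characterization~\eqref{EB}. Removing a non-singleton cell $A$ separates $Y$ into the pieces determined by its finitely many children (Lemma~\ref{lem:graph}~\ref{it:nonsingleton}) and its parent side, which are mutually separated precisely because there are only finitely many of them; likewise each interior edge point is a cutpoint by the argument above. A singleton cell, by contrast, is an endpoint: an isolated one is a leaf met only by its single parent edge, and a non-isolated one $\{x\}$ meets no edge at all and sits only as the limit of its ancestor chain, so $Y\setminus\{x\}$ stays connected in either case. Hence $E(Y)$ is exactly the set of singleton cells, and $x\mapsto\{x\}$ is the desired isometry onto it.

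I expect the main obstacle to be the interplay between the metric and the tree structure near the non-isolated singleton cells. Because~\eqref{rho_point_point} lets $\rho$ record the distance between two cells directly as their Hausdorff distance rather than along the tree, $Y$ is \emph{not} literally a path-metric tree, and one must check carefully that it is nonetheless tree-like: that these ``jumps'' create no shortcuts forming circles and no unexpected identifications near the limit leaves. Every such verification ultimately reduces to the finiteness assertions of Lemmas~\ref{lem:cells}, \ref{lem:compact}, and~\ref{lem:graph}---finitely many large cells, finitely many children, finitely many ancestors---which is exactly what forces the relevant complements to be closed and keeps the local pictures finite.
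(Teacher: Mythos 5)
Your proposal is correct in outline, but it reaches the conclusion ``$Y$ is a dendrite'' by a genuinely different route than the paper. The paper never proves local connectedness or the absence of circles at all: it shows $Y$ is a continuum and then classifies \emph{every} point as either an endpoint (in the continuum-theoretic sense of arbitrarily small neighborhoods with one-point boundary, produced by the subtree sets $K(C)$) or a cutpoint, and invokes Nadler's Theorem 10.7 (a continuum in which each point is an endpoint or a cutpoint is a dendrite). You instead verify the definition head-on: a continuum, locally connected via Property~S using the subtrees below small $\theta$-cells, and circle-free because a simple closed curve can neither pass through an interior edge point (a cutpoint, and near such a point the curve would have to occupy both sides of the free arc) nor lie inside $\Cells(X)$, which is totally disconnected since its non-singleton members are isolated and its singletons copy $X$. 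The trade-off: the paper's route is more economical, since the sets $K(C)$ do double duty (one-point-boundary neighborhoods for endpoints, clopen separators for cutpoints) and local connectedness never has to be addressed; your route needs two extra verifications (Property~S and circle exclusion) plus the observation about $\Cells(X)$, but it identifies $E(Y)$ by the cleaner non-cutpoint criterion~\eqref{EB} and uses a sequential compactness argument where the paper uses open covers --- both legitimate. One spot where your stated justification is thinner than what is actually needed: when you claim the complement of the subtree below $B$ (plus half-edge) is closed ``because only finitely many cells lie above $A$,'' finiteness of ancestors alone does not suffice; you also need that cells disjoint from $B$ cannot Hausdorff-converge to cells inside $B$, which holds because they are subsets of the closed set $X\setminus B$ --- i.e., the clopenness of cells enters alongside the finiteness lemmas. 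With that supplement, your separation arguments close up exactly as the paper's $K(C)$ analysis does.
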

\begin{proof}
There are two trivial cases. If $X$ is empty or a singleton, then $Y=X$ is the desired dendrite. From now on we assume that $X$ contains at least two points.

Construct $(Y,\rho)$ from $(X,d)$ as described in Section~\ref{sec:Yrho}. Let $i:X\to Y$ be the map $i(x)=\{x\}$. It is an isometry of $X$ onto its image by~\eqref{rho_cell_cell}. We will show that $Y$ is a dendrite and $i(X)=E(Y)$.

\emph{Compactness of $Y$:} Let $\mathcal{U}$ be any open cover of $Y$. We know from Lemma~\ref{lem:compact} that $\Cells(X)$ is a compact subspace of $Y$, so it can be covered by a finite subset $\mathcal{U}_1 \subset \mathcal{U}$. By compactness there is $r>0$ such that $\mathcal{U}_1$ covers all of the balls $B_Y(A,r)$, $A\in\Cells(X)$. If $A\subset X$ is a cell with diameter less than $r$ and $B$ is a child of $A$, then by the definition of $\rho$ the whole edge $[A,B]$ is contained in the ball $B_Y(A,r)$. By Lemmas~\ref{lem:cells},~\ref{lem:diam}, and~\ref{lem:graph} there are only finitely many edges in $Y$ not already covered by $\mathcal{U}_1$. But a finite union of arcs in $Y$ is again compact, and so is covered by a finite subset $\mathcal{U}_2\subset\mathcal{U}$. Then $\mathcal{U}_1\cup\mathcal{U}_2$ covers all of $Y$.

\emph{Connectedness of $Y$:}
We show that each point of $Y$ can be joined by an arc in $Y$ to the point $X\in\Cells(X)$. First let $A\in\Cells(X)$. We use Lemma~\ref{lem:graph}. If $A$ is clopen in $X$, then its ancestors form a finite sequence $X=A_1 \supset A_2 \supset \cdots \supset A_n=A$. Then $\bigcup_{i<n} [A_i, A_{i+1}]$ is an arc in $Y$ joining $X$ to $A$. On the other hand, if $A=\{x\}$ with $x$ not isolated in $X$, then its ancestors form an infinite sequence $X=A_1 \supset A_2 \supset \cdots$ with $A_n \to A$. Then the closure of $\bigcup_{i\in\mathbb{N}} [A_i, A_{i+1}]$ is an arc in $Y$ joining $X$ to $A$. Finally, if we choose a point $y$ from an edge $[A,B]$, then the arc in $Y$ joining $X$ to $B$ passes through $y$, so there is a subarc joining $X$ to $y$ in $Y$.

\emph{Endpoints of $Y$:}
We have not yet shown that $Y$ is a dendrite. We only know that it is a continuum. We will show that each singleton cell $A=\{x\}\in Y$ is an endpoint in the sense that it has arbitrarily small neighborhoods with a 1-point boundary.

If $A=\{x\}$ with $x$ isolated in $X$, then $A$ is isolated in $\Cells(X)$ and clopen in $X$. Then the single edge in $Y$ connecting $A$ to its parent cell is a neighborhood of $A$. Within this edge we get arbitrarily small neighborhoods of $A$ with a 1-point boundary in $Y$.

Now suppose $A=\{x\}$ with $x$ not isolated in $X$. Let $\epsilon>0$ be given. By Lemma~\ref{lem:graph} there is an ancestor $C$ of $A$ with $\diam_X(C)<\epsilon/3$. Then $\Cells(C)$ is the subset of $\Cells(X)$ consisting of $C$ and all its descendents. Let $K=K(C)$ be the subset of $Y$ consisting of $\Cells(C)$ together with all edges whose endpoints are both in $\Cells(C)$. The Hausdorff distance between any two cells in $C$ is at most $\epsilon/3$, so by the definition of $\rho$ the diameter of $K$ is at most $\epsilon$. Next we show that $K$ is a neighborhood of $A$. For $B$ the parent cell of $C$ we have $B_Y(A,\rho(A,C))\cap[B,C]=\emptyset$. Let $\theta>0$ be such that $C$ is a $\theta$-cell. Then $B_X(x,\theta)\subset C$. Therefore $B_Y(A,\theta)\cap(\Cells(X)\setminus\Cells(C))=\emptyset$. Besides $[B,C]$, every edge in $Y$ not contained in $K$ has both endpoints in $\Cells(X)\setminus\Cells(C)$. Taken together, this shows that $B_Y(A,r)\subset K$, where $r=\min\{\theta,\rho(A,C)\}$. Next we calculate the boundary of $K$. Just as $K$ is a neighborhood of $A$, so also it is a neighborhood of each of the singleton cells it contains (they are all descendents of $C$). By Lemmas~\ref{lem:compact},~\ref{lem:graph}, and~\ref{lem:freearc}, $K$ is a neighborhood of each of its points except $C$. Thus $K$ has a 1-point boundary.

\emph{Cutpoints of $Y$:}
We would like to show that $Y\setminus\{y\}$ is disconnected for each $y\in Y$ which is not a singleton cell.

If $C\in\Cells(X)$ is not a singleton and $C\neq X$, then we can form $K(C)$ as above and the only point in its boundary is $C$. Then in $Y\setminus\{C\}$ the set $K(C)\setminus\{C\}$ is both open and closed and is not the whole space.

Next consider the cell $X$ itself. By Lemma~\ref{lem:graph} it has finitely many children $C_1, \ldots, C_n$. Form the set $K(C_1) \cup [X,C_1]$. Its boundary in $Y$ is the single point $X$. Then in $Y\setminus\{X\}$ the set $K(C_1)\cup[X,C_1]\setminus\{X\}$ is both open and closed and is not the whole space.

Finally, consider a point $y\in Y\setminus\Cells(X)$. It belongs to an edge $[A,B]$. If $B$ is a singleton cell then we take the subarc joining $y$ to $B$. If $B$ is not a singleton then we form $K(B)$ and join to it the subarc joining $y$ to $B$. In either case we have formed a set whose boundary in $Y$ is the single point $y$, so that in $Y\setminus\{y\}$ we have a clopen set which is not the whole space.

\emph{Concluding arguments:}
We now know that in our continuum $Y$ each singleton cell $\{x\}$ is an endpoint and every other point is a cutpoint. By~\cite[Theorem 10.7]{Nadler} we can conclude that $Y$ is in fact a dendrite. Moreover, its endpoints, the singleton cells, form our isometric copy of $X$, that is, $E(Y)=i(X)$.
\end{proof}

\begin{remark}
It is well known that every totally disconnected compact metric space is homeomorphic to a closed subset of the middle thirds Cantor set~\cite[Exercise 7.23]{Nadler}. Moreover, every closed subset of the middle thirds Cantor set is the endpoint set of some subdendrite of the Gehman dendrite~\cite{KKM}. The strength of Theorem~\ref{th:endpoints} is that it gives an isometry rather than a homeomorphism. This is important in Section~\ref{sec:solution} when we address DC3 chaos, which depends on the choice of the metric.
\end{remark}

\section{Extending maps from a closed endpoint set to the whole dendrite}\label{sec:extension}

As our main tool in this section we will use special families of arcs to cover our dendrites, up to their endpoint sets. Since dendrites are known to be uniquely arcwise connected, we will use the notation $[x,y]$ for the unique arc in $X$ with endpoints $x,y$. Arcs in $X$ are called \emph{disjoint out of $x$} if the intersection of any two of them is just $\{x\}$. We will also use the standard notation $A'$ for the set of \emph{accumulation points} of a set $A\subset X$, that is, the points in the closure $\overline{A}$ which are not isolated in $\overline{A}$.

To make our construction work, we will need the following topological characterization from~\cite{Closed}:

\begin{proposition}[\cite{Closed}]\label{prop:char}
A dendrite $X$ has a closed set of endpoints if and only if $X$ does not contain as a subdendrite a homeomorphic copy of either of the following spaces (see Figure~\ref{fig:forbidden}):
\begin{enumerate}
\item The union in the plane of countably many line segments of length tending to zero, pairwise disjoint out of a common point $p$ (called the locally connected fan).
\item The union in the plane of the line segment connecting $(-1,0)$ to $(1,0)$ and the line segments connecting $(\frac1n,\frac1n)$ to $(\frac1n,0)$.
\end{enumerate}
This occurs if and only if
\begin{enumerate}
\item for each $x\in X$ the maximum number of arcs in $X$ disjoint out of $x$ is finite (each point has finite order), and
\item Branchpoints accumulate only at endpoints, that is, $B(X)'\subset E(X)$.
\end{enumerate}
\end{proposition}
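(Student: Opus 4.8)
The plan is to take the two numbered topological conditions --- finite order at every point, and $B(X)'\subseteq E(X)$ --- as the hub of the argument, abbreviate them jointly as $(\star)$, and prove the three implications $(\star)\Rightarrow E(X)\text{ closed}$, $\neg(\star)\Rightarrow E(X)\text{ not closed}$, and (in both directions) $\neg(\star)\iff X\text{ contains a forbidden subdendrite}$. Since $(\star)\Rightarrow(\text{closed})$ gives the contrapositive $(\text{not closed})\Rightarrow\neg(\star)$, these assemble into $(\text{closed})\iff(\star)\iff(\text{no forbidden subdendrite})$. Throughout I would use three standard facts \cite{Nadler}: the order of a point equals both the number of components of $X\setminus\{x\}$ and the maximal number of arcs disjoint out of $x$; the arc map $(x,y)\mapsto[x,y]$ is continuous into the hyperspace (so if $y_n\to y$ then $[x,y_n]\to[x,y]$); and in a Peano continuum the components of $X\setminus\{p\}$ form a null family, their diameters tending to $0$. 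I would also use repeatedly that if $U$ is open and $e\in U$ is an endpoint of the subdendrite $\overline U$, then $e\in E(X)$.

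For $(\star)\Rightarrow E(X)$ closed I argue by contradiction: suppose endpoints $e_n\to x$ with $x\notin E(X)$. Being a non-endpoint, $x$ is also not in $B(X)'$ by $(\star)$, so $x$ has a neighborhood meeting $B(X)$ in at most $\{x\}$. Shrinking to a connected subneighborhood whose closure is a subdendrite and discarding the finitely many $e_n$ outside it, the arcs $[x,e_n]$ all lie in that subdendrite. Any two of them intersect in an initial segment $[x,c]$, and if $c\neq x$ then $c$ is a branchpoint; since no branchpoint other than $x$ is available, $c=x$, so the $[x,e_n]$ are pairwise disjoint out of $x$. Infinitely many arcs disjoint out of $x$ make $x$ of infinite order, contradicting $(\star)$.

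For $\neg(\star)$ I would produce in one stroke both a sequence of genuine endpoints of $X$ converging to a non-endpoint (destroying closedness) and a forbidden subdendrite. If some $p$ has infinite order, the null-family property forces the infinitely many components of $X\setminus\{p\}$ to shrink to $p$; each carries an endpoint of $X$, and these converge to the non-endpoint $p$, while truncating short initial arcs into infinitely many of the components and adjoining $p$ yields a locally connected fan. Otherwise every point has finite order but branchpoints $b_n\to z$ with $z\notin E(X)$; since $z$ has finite order, only finitely many components of $X\setminus\{z\}$ exist, so infinitely many $b_n$ lie in a single component $O$. I pass to $\overline O$, in which $z$ is an \emph{endpoint}, fix an arc $[z,w]\subset\overline O$, and retract the $b_n$ onto it. Because $z$ has order one in $\overline O$, the projections $s_n$ are never $z$; because the branch hanging at a fixed $s\neq z$ is closed and omits $z$, no value repeats infinitely often; so after thinning the $s_n$ are distinct and $s_n\to z$. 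The branches $V_n$ attached at the $s_n$ are pairwise disjoint and, by continuity of the arc map, have diameters tending to $0$ (otherwise points of the $V_n$ bounded away from $s_n$ would converge while the arcs joining them stay large, contradicting $[y,y']\to\{z\}$). Their tips are endpoints of $X$ converging to $z$, and extending $[z,w]$ across $z$ into a second component of $X\setminus\{z\}$ (available since $z$ is a non-endpoint) makes $z$ interior to a spine carrying the teeth $V_n$, i.e.\ a copy of the comb.

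Finally, ``forbidden subdendrite $\Rightarrow\neg(\star)$'' is immediate from monotonicity of order under inclusion: the centre of a locally connected fan has infinite order already in the fan, hence in $X$; and the accumulation point of the comb has order two there, so it is a non-endpoint of $X$ at which the order-three tooth bases accumulate, witnessing $B(X)'\not\subseteq E(X)$. Combining the pieces yields the stated equivalences. I expect the genuine difficulty to lie entirely in the finite-order, accumulating-branchpoints case of the middle step. The two traps are that the endpoints one writes down may belong to a subdendrite but not to $X$, and that metric distortion might keep the ``teeth'' large; both are defeated only by exploiting local connectedness --- through the null-family property and the continuity of the arc map that force the teeth to shrink --- together with the remark that an endpoint lying in an open subset of $X$ is a genuine endpoint of $X$.
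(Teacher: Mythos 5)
Your proposal cannot be compared against an internal argument, because the paper does not prove this proposition at all: it is quoted as a known characterization from the cited work of Ar\'{e}valo, Charatonik, Pellicer Covarrubias and Sim\'{o}n \cite{Closed} and used later as a black box (in Lemma~\ref{lem:trees} and in the proof of Theorem B). What you have written is therefore a self-contained proof of an imported result, and it is essentially correct. Your three-implication scheme --- $(\star)\Rightarrow E(X)$ closed; $\neg(\star)\Rightarrow$ both non-closedness and a forbidden subdendrite; forbidden subdendrite $\Rightarrow\neg(\star)$ --- does assemble into the two stated equivalences, and the technical pivots are all sound: the null-family property of components of $X\setminus\{p\}$ gives the fan in the infinite-order case; the first-point map $r$ onto $[z,w]$, the closedness of its point-preimages, and continuity of the arc map give the comb in the accumulating-branchpoints case; and the locality of order (an endpoint of $\overline{U}$ lying in an open set $U$ is an endpoint of $X$) is precisely what guarantees that the endpoints you exhibit belong to $E(X)$ rather than merely to a subdendrite --- this is the trap that a careless proof falls into, and you avoid it explicitly. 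Two routine verifications should be spelled out in a full write-up: in the first implication, the divergence point $c$ of $[x,e_n]$ and $[x,e_m]$ must be shown to differ from $e_n$ and $e_m$ before you may call it a branchpoint (immediate, since an endpoint of $X$ cannot lie in the interior of an arc, after thinning to distinct $e_n\neq x$); and in the comb case each branch $V_n=r^{-1}(s_n)$ must be shown nondegenerate so that it has a tip at all (immediate if $b_n\neq s_n$; if $b_n=s_n$, use that $b_n$ has order at least $3$ in $\overline{O}$, so some component of $\overline{O}\setminus\{b_n\}$ misses $[z,w]$ and lies inside $V_n$). Neither is a genuine gap. What your route buys is self-containedness, achieved with the same toolkit the paper deploys elsewhere (first-point maps, null families, uniform smallness of arcs); what the paper's citation buys is brevity, since only the two displayed topological conditions, not the proof, are ever used.
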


The proof of the following lemma is a slight modification of a textbook construction, see~\cite[Theorem 13.27 and Corollary 10.28]{Nadler}. We use the extra condition that $E(X)$ is closed to ensure that the arcs $\alpha_i$ we obtain are free arcs. We give only a sketch of the proof.

\begin{lemma}\label{lem:trees}
Let $X$ be a dendrite containing more than one point and with $E(X)$ closed in $X$. Then there is a sequence of trees $\{p_0\}=T_0\subset T_1 \subset T_2 \subset \cdots$ in $X$ with $\bigcup_i T_i = X\setminus E(X)$ and $\alpha_i=\overline{T_i\setminus T_{i-1}}$ a free arc in $X$ for all $i\in\mathbb{N}$. It follows that
\begin{equation*}
\dist(E(X),T_i)\to 0, \, \dist(\alpha_i, E(X)) \to 0, \, \textnormal{ and } \diam(\alpha_i)\to0 \, \textnormal{ as } i\to\infty.
\end{equation*}
\end{lemma}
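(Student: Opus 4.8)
The plan is to follow the textbook exhaustion of a dendrite by an increasing sequence of subtrees (\cite{Nadler}), but to arrange the exhaustion according to distance from $E(X)$ and to invoke Proposition~\ref{prop:char} to make every increment a free arc. I begin by recording the local structure of the open set $X\setminus E(X)$. By Proposition~\ref{prop:char} each point of $X$ has finite order and $B(X)'\subset E(X)$; hence $B(X)$ is closed and locally finite in $X\setminus E(X)$, and every point of order $2$ lies interior to a free arc---such a point is neither an endpoint nor (since $B(X)'\subset E(X)$) a limit of branchpoints, so it has a connected, branchpoint-free neighbourhood, which is an arc with open interior. Consequently any arc whose two endpoints lie in $X\setminus E(X)$ lies entirely in $X\setminus E(X)$ (its interior points are cutpoints) and is a finite concatenation of free arcs joined at branchpoints.

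Next I would build the trees. Fix $p_0\in X\setminus E(X)$ and set $g(x)=\dist(\{x\},E(X))$ and $X_\delta=\{x:g(x)\ge\delta\}$, a compact subset of $X\setminus E(X)$. The key finiteness claim is that $X_\delta$ meets only finitely many free arcs: otherwise distinct free arcs would accumulate at some $a^*$ with $g(a^*)\ge\delta$, forcing branchpoints to accumulate at $a^*\notin E(X)$, contrary to $B(X)'\subset E(X)$. Therefore the convex hull $F_k$ of $X_{1/k}\cup\{p_0\}$ is a finite tree lying in $X\setminus E(X)$ (its leaves belong to the generating set, so it contains no endpoint of $X$); the $F_k$ increase and $\bigcup_k F_k=\bigcup_k X_{1/k}=X\setminus E(X)$. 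Passing from $F_{k-1}$ to $F_k$ adjoins finitely many branches; subdividing each at its finitely many branchpoints and adjoining the resulting subarcs one at a time, each attached to the current tree at a single point, yields the desired sequence $\{p_0\}=T_0\subset T_1\subset\cdots$ with $\bigcup_i T_i=X\setminus E(X)$ and every $\alpha_i=\overline{T_i\setminus T_{i-1}}$ a free arc.

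The three limit statements are then read off. Because $\bigcup_i T_i$ is dense in $X$ and the $T_i$ increase, the continuous maps $x\mapsto\dist(\{x\},T_i)$ decrease pointwise to $0$ on the compact space $X$, so Dini's theorem gives $\dist(E(X),T_i)\le\dist(X,T_i)\to0$. For $\dist(\alpha_i,E(X))\to0$ I use the layering: the arcs added between $F_{k-1}$ and $F_k$ lie in $X\setminus F_{k-1}\subset X\setminus X_{1/(k-1)}$, so every one of their points satisfies $g\le 1/(k-1)$, whence $\dist(\alpha_i,E(X))\le 1/(k-1)\to0$ as the level grows with $i$. Finally $\diam(\alpha_i)\to0$ follows from disjointness of the arc interiors: were infinitely many $\alpha_i$ of diameter $\ge\epsilon$, I could choose spanning point pairs converging to distinct limits $p\ne q$ and an interior point $M$ of $[p,q]$; for large $i$ the two ends of the spanning pair would lie in different components of $X\setminus\{M\}$, forcing $M\in\alpha_i$, and since the interiors are disjoint $M$ would be a common endpoint of infinitely many arcs---impossible, as $M$ has finite order.

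The main obstacle is the finiteness claim---that each $X_\delta$ meets only finitely many free arcs, so that $F_k$ is a genuine finite tree---since this is precisely where both conclusions of Proposition~\ref{prop:char} (finite order and $B(X)'\subset E(X)$), and hence the hypothesis that $E(X)$ is closed, enter. A second point worth stressing is that the metric conclusions fail for the naive hull of an arbitrary dense sequence: on $[0,1]$, for instance, one can keep adjoining ever smaller arcs near the midpoint, so that $\dist(\alpha_i,E(X))\not\to0$. Arranging the exhaustion by the level sets of $g$ is what repairs this.
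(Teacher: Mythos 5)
Your proof is correct, but it follows a genuinely different route from the paper's. The paper reuses the textbook exhaustion almost verbatim: by Proposition~\ref{prop:char} it picks a countable set $P\supset B(X)$ with $P'=E(X)$, enumerates it so that $p_j\in[p_0,p_i]$ implies $j<i$, and takes $T_i$ to be the hull of $\{p_0,\ldots,p_i\}$; each increment $[p_{j(i)},p_i]$ is a free arc simply because $B(X)\subset P$, and all three limit statements are deduced from $P'=E(X)$ together with the first point maps $r_i:X\to T_i$, which converge uniformly to the identity by uniform local arcwise connectedness~\cite[(8.30)]{Nadler}. You instead organize the exhaustion by the level sets of $g=\dist(\cdot,E(X))$, prove the hulls $F_k$ are finite trees via your finiteness claim, and interpolate; the limit statements then come from Dini's theorem, the layering bound $\dist(\alpha_i,E(X))\le 1/(k-1)$ for stage-$k$ arcs, and a disjoint-interiors/finite-order argument for $\diam(\alpha_i)\to 0$, with no retraction machinery at all. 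The paper's hypothesis $P'=E(X)$ buys brevity: the free-arc property and the crowding of the $\alpha_i$ toward $E(X)$ are automatic, which is precisely the issue raised in your closing remark (density of $P$ alone would not suffice). Your layering buys explicit quantitative control and more elementary limit arguments, at the cost of having to establish the finiteness claim and the interpolation details. Two points to make explicit if you write this up: in the finiteness claim, the case where the accumulation point $a^*$ is itself a common endpoint of infinitely many of the free arcs is excluded by the finite-order condition (so both halves of Proposition~\ref{prop:char} genuinely enter, as you anticipated); and the new branches must be subdivided at all branchpoints of $X$ lying on them (finitely many, since the branches avoid $E(X)$), not merely at branchpoints of the tree $F_k$, so that the interiors of the $\alpha_i$ are open in $X$.
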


\begin{proof}
By Proposition~\ref{prop:char} we can choose a countable set $P\subset X\setminus E(X)$ with $P\supset B(X)$ and $P'=E(X)$. Since $P$ accumulates only at endpoints of $X$, we can choose an enumeration $(p_i)_{i=0}^\infty$ of $P$ such that if $p_j\in[p_0,p_i]$, then $j<i$. Let
\begin{equation*}
T_i=\{p_0\}\cup[p_0,p_1]\cup[p_0,p_2]\cup\cdots\cup[p_0,p_i], \quad i\in\mathbb{N}.
\end{equation*}
Then $\overline{T_i\setminus T_{i-1}}=[p_{j(i)},p_i]=:\alpha_i$ for some $j(i)<i$. From $B(X)\cap (\alpha_i\setminus E(\alpha_i))=\emptyset$ it follows that $\alpha_i$ is a free arc in $X$. From $P'=E(X)$ it follows that $\bigcup_i T_i = X\setminus E(X)$ and therefore $\HausDist(T_i, X)\to 0$ as $i\to\infty$. This immediately gives $\dist(E(X), T_i) \to 0$.

Now let $\epsilon>0$ be given. By the uniform local arcwise connectedness property~\cite[(8.30)]{Nadler} there is $\delta>0$ such that any two points which are $\delta$-close in $X$ are connected by an arc of diameter $<\epsilon$. Find $N$ such that for $i\geq N$, $\HausDist(T_i, X)<\delta$. Let $r_i:X\to T_i$ denote the \emph{first point map} for $T_i$, that is, the retraction sending each point $x$ to the unique point $r_i(x)$ which belongs to every arc $[x,y]$ with $y\in T_i$. For $i\geq N$ and $x\in X\setminus T_i$ there is $y\in T_i$ with $d(x,y)<\delta$, and $[x, r_i(x)] \subset [x,y]$. This shows that $d(r_i(x),x)<\epsilon$ for all $i\geq N$ and all $x\in X$, that is, $r_i$ converges uniformly to the identity on $X$. Since $r_i$ collapses $\alpha_{i+1}$ to a point, this shows that $\diam(\alpha_i)\to 0$ as $i\to\infty$. Together with $P'=E(X)$ this gives $\dist(\alpha_i, E(X)) \to 0$ as well.
\end{proof}

Now we are ready for the main theorem of this section. The proof technique is borrowed from~\cite{Spitalsky}.

\begin{thmb}
Let $X$ be a dendrite with a closed set of endpoints. Any continuous map $f:E(X)\to E(X)$ can be extended to a continuous map $F:X \to X$ in such a way that each point of $X\setminus E(X)$ is eventually fixed.
\end{thmb}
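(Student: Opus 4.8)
The plan is to exploit the tree exhaustion $\{p_0\}=T_0\subset T_1\subset\cdots$ furnished by Lemma~\ref{lem:trees}, in which each $\alpha_i=\overline{T_i\setminus T_{i-1}}$ is a free arc based at some vertex $p_{j(i)}\in T_{i-1}$ and reaching a tip $p_i$. I assign to each point of $X\setminus E(X)$ a \emph{depth}, namely the least $i$ with the point lying in $T_i$, and I will build $F$ so that it strictly lowers depth, sending everything of depth $i\ge 1$ into $T_{i-1}$ while keeping $p_0$ fixed. Any interior point then lands in $T_0=\{p_0\}$ after finitely many iterates and is therefore eventually fixed, so the dynamical half of the statement comes for free. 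The delicate half is continuity, which I arrange by making the image of each arc $\alpha_i$ \emph{shadow} $f$ on the endpoints that $\alpha_i$ approaches.

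Concretely, let $r_i\colon X\to T_i$ be the first-point retraction from the proof of Lemma~\ref{lem:trees} (recall that $r_i$ converges uniformly to the identity on $X$). I set $F(p_0)=p_0$, and for each $i\ge 1$ I choose an endpoint $e_i\in E(X)$ with $d(p_i,e_i)=\dist(p_i,E(X))$ and define $F(p_i)=r_{i-1}\!\left(f(e_i)\right)\in T_{i-1}$. Since $T_{i-1}$ is a subdendrite, the images $F(p_{j(i)})$ and $F(p_i)$ are joined inside it by the arc $[F(p_{j(i)}),F(p_i)]$, and I extend $F$ over $\alpha_i$ by any continuous surjection of $\alpha_i$ onto that arc (a single point if the two images coincide) matching the two endpoints; finally I put $F=f$ on $E(X)$. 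By construction $F(\alpha_i)\subset T_{i-1}$, so $F$ lowers depth as required and the definitions agree at the shared vertices $p_{j(i)}$. Because $\diam(\alpha_i)\to0$ and $\dist(\alpha_i,E(X))\to0$, the arcs are locally finite in $X\setminus E(X)$, which already yields continuity of $F$ there, and the depth count shows that $F^i(x)=p_0$ for every $x$ of depth $i$, so every interior point is eventually fixed.

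The main obstacle is continuity at the endpoints, that is, showing $F(x)\to f(e)$ whenever interior points $x\to e\in E(X)$. Such $x$ lie in arcs $\alpha_i$ with $\diam(\alpha_i)\to0$ and $\alpha_i\to\{e\}$ in the Hausdorff metric, so both $p_i$ and $p_{j(i)}$ tend to $e$; hence the chosen endpoints tend to $e$ as well, and continuity of $f$ together with $r_i\to\mathrm{id}$ gives $F(p_i)\to f(e)$ and $F(p_{j(i)})\to f(e)$. Thus both ends of $F(\alpha_i)=[F(p_{j(i)}),F(p_i)]$ converge to $f(e)$, and the uniform local arcwise connectedness of $X$ (used already in Lemma~\ref{lem:trees}) forces $\diam F(\alpha_i)\to0$, whence $F(\alpha_i)\to\{f(e)\}$ and $F(x)\to f(e)$. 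Verifying this transfer of moduli of continuity, and combining it with the continuity of $f$ on $E(X)$ itself, is where the real work lies; the remaining bookkeeping—well-definedness of the retractions, the identity $F|_{E(X)}=f$, and the depth argument—is routine.
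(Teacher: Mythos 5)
Your proposal is correct and follows essentially the same route as the paper's own proof: the same exhaustion $T_0\subset T_1\subset\cdots$ from Lemma~\ref{lem:trees}, the same scheme of sending each vertex $p_i$ into $T_{i-1}$ near $f(e_i)$ for a nearby endpoint $e_i$, extending over the free arcs $\alpha_i$, and establishing continuity at endpoints by forcing the image arcs to shrink via uniform local arcwise connectedness. Your use of the first-point retraction, $F(p_i)=r_{i-1}(f(e_i))$, is just a concrete instantiation of the paper's abstract choice of $q_i\in T_{i-1}$ with $d(q_i,f(e_i))\to 0$, and your depth/local-finiteness bookkeeping matches the paper's argument that $F(T_i)\subset T_{i-1}$ and that continuity off $E(X)$ is automatic.
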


\begin{proof}
Let $T_i$, $\alpha_i$, and $p_i$ be as in the statement and proof of Lemma~\ref{lem:trees}. Choose a sequence of endpoints $e_i\in E(X)$ with $d(p_i,e_i)\to0$. Let $q_0=p_0$ and for $i\geq1$ choose $q_i\in T_{i-1}$ so that $d(q_i,f(e_i))\to0$. Define $F$ on $P$ by $F(p_i)=q_i$. Then extend $F$ to each arc $\alpha_i=[p_i,p_{j(i)}]$ as a homeomorphism onto the arc $[q_i,q_{j(i)}]\subset T_{i-1}$. Finally, on $E(X)$ take $F=f$. Since the arcs $\alpha_i$ are disjoint up to their endpoints and do not intersect $E(X)$, the map $F$ is well-defined.

By Lemma~\ref{lem:trees} we have $d(p_i,p_{j(i)})\to0$. Then $d(e_i,e_{j(i)})\to0$ by construction, $d(f(e_i), f(e_{j(i)}))\to0$ by the continuity of $f$, and $d(q_i,q_{j(i)})\to0$ by construction. Again applying the uniform local arcwise connectedness property~\cite[(8.30)]{Nadler}, we get $\diam(F(\alpha_i))\to0$.

Note that $p_0$ is a fixed point for $F$. For $x\in X\setminus E(X)$ there is $i$ such that $x\in T_i$. Then $F(x)\in T_{i-1}$, $F^2(x)\in T_{i-2}$, and so on until $F^i(x)$ arrives at the fixed point $p_0$.

It remains to show that $F$ is continuous. Since each $\alpha_i$ is a free arc in $X$, continuity is clear on $\alpha_i\setminus E(\alpha_i)$. By Proposition~\ref{prop:char} only finitely many of those free arcs share a given endpoint $p_i\in P$, so we get continuity of $F$ at each $p_i$. Now let $e\in E(X)$. It suffices to consider a sequence $x_n \to e$ with each $x_n\in X\setminus E(X)$. For each $n$ choose an index $i(n)$ so that $x_n\in \alpha_{i(n)}$. Then $i(n)\to\infty$ and $p_{i(n)}\to e$, so we must have $q_{i(n)} \to f(e)$. Since $\diam(F(\alpha_n))\to0$, this implies $F(x_n) \to f(e)=F(e)$ as well.
\end{proof}

Combining Theorems A and B leads to the following corollary.

\begin{corollary}\label{cor:main}
If $(X,d)$ is a totally disconnected compact metric space and $f:X\to X$ is continuous, then there exist a dendrite $(Y,\rho)$, an isometry $i:X\to E(Y)$, and a continuous map $F:Y\to Y$, such that $F\circ i = i\circ f$ and every point $y \in Y\setminus E(Y)$ is eventually fixed under $F$.
\end{corollary}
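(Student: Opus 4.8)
The plan is to apply Theorems A and B in succession, using the isometry produced by the former to conjugate the given dynamics onto the endpoint set before extending them with the latter.

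First I would invoke Theorem A to obtain a dendrite $(Y,\rho)$ together with an isometry $i:X\to E(Y)$ satisfying $i(X)=E(Y)$. Since $X$ is compact and $i$ is an isometry, the image $i(X)=E(Y)$ is compact and therefore closed in $Y$. Hence $Y$ is a dendrite with a closed set of endpoints, which is precisely the hypothesis needed to apply Theorem B.

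Next I would transport $f$ to a self-map of the endpoint set. Because $i$ is an isometry onto $E(Y)$, the inverse $i^{-1}:E(Y)\to X$ is continuous, so $g:=i\circ f\circ i^{-1}$ is a continuous map $E(Y)\to E(Y)$. Applying Theorem B to $g$ produces a continuous extension $F:Y\to Y$ with $F|_{E(Y)}=g$ and with every point of $Y\setminus E(Y)$ eventually fixed under $F$. The conjugacy relation is then immediate: for each $x\in X$,
\[
F(i(x))=g(i(x))=i\bigl(f(i^{-1}(i(x)))\bigr)=i(f(x)),
\]
so $F\circ i=i\circ f$ on all of $X$, while the eventual-fixedness of every point of $Y\setminus E(Y)$ is delivered directly by Theorem B.

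I do not expect a substantial obstacle here: the entire content of the corollary is already packaged in the two theorems, and the proof reduces to a conjugation followed by an extension. The only step requiring a moment's care is the verification that the isometric image $i(X)$ is a closed subset of $Y$ — this is what licenses the application of Theorem B — and it follows at once from the compactness of $X$ and the fact that $i$ is an isometry.
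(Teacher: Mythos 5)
Your proposal is correct and matches the paper's intended argument exactly: the paper gives no separate proof, stating only that the corollary follows by ``combining Theorems A and B,'' and your write-up simply fills in the routine details (closedness of $E(Y)$ via compactness, conjugation by the isometry, then extension). Nothing further is needed.
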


In other words, every zero-dimensional dynamical system can be isometrically embedded into a system on a dendrite with all extra points eventually fixed.

\section{Application: a dendrite map which is DC3 but not Li-Yorke chaotic}\label{sec:solution}

Let $(X,d)$ be a compact metric space and $f:X\to X$ a continuous map. We refer to the triple $(X,d,f)$ as a \emph{dynamical system}. A pair of points $x,y\in X$ are a \emph{Li-Yorke} pair if the sequence of distances $\big(d(f^i(x),f^i(y))\big)_{i=0}^\infty$, has lower limit zero and upper limit positive. We may also be interested in the statistical distribution of these distances, and so, following~\cite{Stefankova}, we define \emph{lower and upper distribution functions},
\begin{gather*}
\Phi_{x,y}(s)=\liminf \frac1n \# \{ i \leq n ~|~ d(f^i(x),f^i(y)) < s \},\\
\Phi^*_{x,y}(s)=\limsup \frac1n \# \{ i \leq n ~|~ d(f^i(x),f^i(y)) < s \}.
\end{gather*}
The points $x,y$ are a \emph{DC3 pair} if the distribution functions differ on a whole interval, i.e. if there exist $a<b$ such that for all $s\in[a,b]$, $\Phi_{x,y}(s) < \Phi^*_{x,y}(s)$. A set $S\subset X$ is said to be \emph{Li-Yorke scrambled} (resp. \emph{DC3 scrambled}) if all pairs of points $x,y\in S$, $x\neq y$ are Li-Yorke pairs (resp. DC3 pairs). The dynamical system $(X,d,f)$ is called \emph{Li-Yorke chaotic} if it has an uncountable Li-Yorke scrambled set, and it is called \emph{DC3 chaotic} if it has an uncountable DC3 scrambled set. For maps of the interval, trees, and graphs, these two notions are equivalent~\cite{HricMal, Li, Mal07, Smital}. We will show that this equivalence does not hold anymore for dendrite maps.

Let $\Omega=\{0,1\}^{\mathbb{N}_0}$ be the Cantor set and denote $\underline{0}=0000\cdots\in\Omega$. Consider the \emph{adding machine} $\tau(\omega)=\omega+1000\cdots$ with carrying to the right. Equip $\Omega$ with the metric $d_{\Omega}(\omega,\eta)=2^{-\inf\{i~|~\omega_i\neq\eta_i\}}$. Then $\diam(\Omega)=1$ and $\tau:\Omega\to\Omega$ is an isometry.

Choose three decreasing sequences of real numbers $(x_i), (y_i), (z_i)$ indexed by $\mathbb{N}$ with $$x_1=1,\,\, x_i \searrow 0,\,\, y_1=1,\,\, y_i \searrow 0,\,\, z_1=3,\,\, z_i \searrow 2.$$
Consider the subset of $\mathbb{R}^2$ given by
$$P=\{(x_i,y_j)~|~1\leq j\leq i<\infty\}\cup\{(x_i,z_j)~|~1\leq j\leq10^i<\infty\}$$
and let $(p_t)_{t=0}^\infty$ be the enumeration of these points ordered bottom-to-top within each column, then right-to-left column by column, as shown in Figure~\ref{fig:C}. The point at the top of the $n$th column is $p_{T_n}=(x_n,z_1)$, where $T_n=-1+\sum_{i=1}^n (i+10^i)$. Let $Q$ be the reflection of $P$ across the $x$-axis with $q_t$ the reflection of $p_t$. Let $C$ be the closure of $P\cup Q$ in $\mathbb{R}^2$. Let $L=C'$ be the set of accumulation points of $C$. Then $L$ is contained in the $y$-axis and $C$ is the disjoint union $P\cup Q\cup L$. Equip $C$ with the metric $d_C\big((x,y),(x',y')\big)=\max\{|x-x'|,|y-y'|\}$.

\begin{figure}[htb!!]
\scalebox{.7}{
\begin{tikzpicture}[scale=1.5]
\draw [<->, gray, ultra thick] (-0.2,0) -- (6.5,0);
\draw [<->, gray, ultra thick] (0,3.2) -- (0,-3.2);
\foreach \i in {1,...,4} {
 \foreach \j in {1,...,\i} {
  \draw [fill] (6/\i,{pow(10,(\j-1)*(-0.2))}) circle [radius=0.05];
  \draw [fill] (6/\i,{-pow(10,(\j-1)*(-0.2))}) circle [radius=0.05];
 }
 \foreach \j in {1,...,10} {
  \draw [fill] (6/\i,{2+pow(\j,(-0.2))}) circle [radius=0.05];
  \draw [fill] (6/\i,{-2-pow(\j,(-0.2))}) circle [radius=0.05];
 }
 \ifthenelse{\i=1}{}{
  \foreach \j in {11,...,100}{
   \draw [fill] (6/\i,{2+pow(\j,(-0.2))}) circle [radius=0.05];
   \draw [fill] (6/\i,{-2-pow(\j,(-0.2))}) circle [radius=0.05];
  }
 };
 \ifthenelse{\i=1 \OR \i=2}{}{
  \foreach \j in {101,201,...,1000}{
   \draw [fill] (6/\i,{2+pow(\j,(-0.2))}) circle [radius=0.05];
   \draw [fill] (6/\i,{-2-pow(\j,(-0.2))}) circle [radius=0.05];
  }
 };
 \ifthenelse{\i=1 \OR \i=2 \OR \i=3}{}{
  \foreach \j in {1001,2001,...,10000}{
   \draw [fill] (6/\i,{2+pow(\j,(-0.2))}) circle [radius=0.05];
   \draw [fill] (6/\i,{-2-pow(\j,(-0.2))}) circle [radius=0.05];
  }
 };
}
\foreach \j in {1,...,4} {
 \draw [fill] (0,{pow(10,(\j-1)*(-0.2))}) circle [radius=0.05];
 \draw [fill] (0,{-pow(10,(\j-1)*(-0.2))}) circle [radius=0.05];
}
\foreach \j in {1,...,100} {
 \draw [fill] (0,{2+pow(\j,(-0.2))}) circle [radius=0.05];
 \draw [fill] (0,{-2-pow(\j,(-0.2))}) circle [radius=0.05];
}
\foreach \j in {101,201,...,1000} {
 \draw [fill] (0,{2+pow(\j,(-0.2))}) circle [radius=0.05];
 \draw [fill] (0,{-2-pow(\j,(-0.2))}) circle [radius=0.05];
}
\foreach \j in {1001,2001,...,10000} {
 \draw [fill] (0,{2+pow(\j,(-0.2))}) circle [radius=0.05];
 \draw [fill] (0,{-2-pow(\j,(-0.2))}) circle [radius=0.05];
}
\draw [fill] (0,0) circle [radius=0.1];
\draw [fill] (0,2) circle [radius=0.1];
\draw [fill] (0,-2) circle [radius=0.1];
\node [right, shift={(0.1,0)}] at (6,1) {$p_0$};
\node [right, shift={(0.1,0)}] at (6,{2+pow(10,-0.2)}) {$p_1$};
\node [right, shift={(0.1,0)}] at (6,{2+pow(1,-0.2)}) {$p_{10}$};
\node [right, shift={(0.1,0)}] at (3,{pow(10,-0.2)}) {$p_{11}$};
\node [right, shift={(0.1,0)}] at (3,1) {$p_{12}$};
\node [right, shift={(0.1,0)}] at (3,{2+pow(100,-0.2)}) {$p_{13}$};
\node [right, shift={(0.1,0)}] at (3,{2+pow(1,-0.2)}) {$p_{112}$};
\node [right, shift={(0.1,0)}] at (6,-1) {$q_0$};
\node [right, shift={(0.1,0)}] at (6,{-2-pow(10,-0.2)}) {$q_1$};
\node [right, shift={(0.1,0)}] at (6,{-2-pow(1,-0.2)}) {$q_{10}$};
\node [right, shift={(0.1,0)}] at (3,{-pow(10,-0.2)}) {$q_{11}$};
\node [right, shift={(0.1,0)}] at (3,-1) {$q_{12}$};
\node [right, shift={(0.1,0)}] at (3,{-2-pow(100,-0.2)}) {$q_{13}$};
\node [right, shift={(0.1,0)}] at (3,{-2-pow(1,-0.2)}) {$q_{112}$};
\draw [gray, ultra thick] (0,-3.4) -- (0,-3.8) node [below, black] {$0$};
\foreach \i in {1,2,3,4} {
 \draw [gray, ultra thick] (6/\i,-3.6) -- (6/\i,-3.8) node [below, black] {$x_{\i}$};
}
\foreach \j in {-2,0,2} {
 \draw [gray, ultra thick] (-0.4, \j) -- (-0.8, \j) node [left, black] {$\j$};
}
\foreach \j in {1,10,100} {
 \draw [gray, ultra thick] (-0.6, {2+pow(\j,-0.2)}) -- (-0.8, {2+pow(\j,-0.2)}) node [left, black] {$z_{\j}$};
 \draw [gray, ultra thick] (-0.6, {-2-pow(\j,-0.2)}) -- (-0.8, {-2-pow(\j,-0.2)});
}
\foreach \j in {1,2,3} {
 \draw [gray, ultra thick] (-0.6, {pow(10,-0.2*(\j-1))}) -- (-0.8, {pow(10,-0.2*(\j-1))}) node [left, black] {$y_{\j}$};
 \draw [gray, ultra thick] (-0.6, {-pow(10,-0.2*(\j-1))}) -- (-0.8, {-pow(10,-0.2*(\j-1))});
}
\end{tikzpicture}
}
\caption{The fiber space $C$ is countable and compact.}
\label{fig:C}
\end{figure}

Now we construct a skew-product dynamical system. It has the form
\begin{align*}
X&=\Omega \times \Omega \times C &&- \quad \text{``Timer'' $\times$ ``Controller'' $\times$ ``Fiber space''} \\
f(\omega, \eta, c) &= \big(\tau(\omega), \eta, \varphi_{\omega,\eta}(c)\big) &&- \quad \text{``Add. machine'' $\times$ ``Id'' $\times$ ``Fiber map''} \\
\vphantom{\big(\big)}d &= \max\{d_\Omega, d_\Omega, d_C\} && - \quad \text{``Standard product metric''}
\end{align*}
The fiber maps are defined so that $(p_t)$, $(q_t)$ are orbits in the fiber space, except that when we reach one of the top or bottom points $p_{T_n}, q_{T_n}$, we have three options. If the reading on the timer is not close enough to $\tau^{T_n}(\underline{0})$, then we jump directly to the origin. If the timer is okay, then we look at the $n$th symbol in the control sequence $\eta$ to decide which of the sets $P, Q$ to visit next as we start our walk through the $(n+1)$st column. Formally, we have
\begin{gather*}
\varphi_{\omega,\eta}(p_t)=p_{t+1},\,\, \varphi_{\omega,\eta}(q_t)=q_{t+1}\,\,\text{ for }t\not\in\{T_n ~|~ n=1,2,\ldots\}, \\
\varphi_{\omega,\eta}(p_{T_n}) = \varphi_{\omega,\eta}(q_{T_n}) = \begin{cases}
p_{T_n+1}, & \text{ if } d_\Omega(\omega,\tau^{T_n}(\underline{0}))<2^{-n} \text{ and } \eta_n=0, \\
q_{T_n+1}, & \text{ if } d_\Omega(\omega,\tau^{T_n}(\underline{0}))<2^{-n} \text{ and } \eta_n=1, \\
(0,0), & \text{ if } d_\Omega(\omega,\tau^{T_n}(\underline{0}))\geq2^{-n}.
\end{cases}
\end{gather*}
We extend our definition of $\varphi_{\omega,\eta}$ to the limit set $L$ in the unique continuous way. It does not depend on $\omega$ or $\eta$. The points $(0,0)$, $(0,\pm2)$ are fixed, the points $(0,\pm3)$ are mapped to the origin, and each other point is mapped along the $y$-axis and away from the origin to the next point of $L$.

\begin{lemma}
The map $f$ defined above is continuous.
\end{lemma}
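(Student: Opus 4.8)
The plan is to reduce the continuity of $f$ to joint continuity of the fiber map and then to treat the points of $C$ according to whether they are isolated. Since the first coordinate map $\tau$ is an isometry and the second coordinate is the identity, continuity of $f$ at a point $(\omega,\eta,c)$ is equivalent to joint continuity of the assignment $(\omega,\eta,c)\mapsto\varphi_{\omega,\eta}(c)$ there. Because $C=P\cup Q\cup L$ with $L=C'$, every point of $P\cup Q$ is isolated in $C$ while the points of $L$ are exactly the accumulation points, so I would handle these two cases separately.

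First I would dispose of a point $(\omega,\eta,c)$ with $c\in P\cup Q$. Here $c$ is isolated in $C$, so any $c'\in C$ close enough to $c$ equals $c$, and it remains only to check that $\varphi_{\omega',\eta'}(c)$ is locally constant in $(\omega',\eta')$. The sole dependence on $(\omega,\eta)$ occurs at the top points $p_{T_n},q_{T_n}$, where it is governed by the two conditions $d_\Omega(\omega,\tau^{T_n}(\underline{0}))<2^{-n}$ and $\eta_n\in\{0,1\}$. In the metric $d_\Omega$ the sets $\{\omega:d_\Omega(\omega,\tau^{T_n}(\underline{0}))<2^{-n}\}$ and $\{\eta:\eta_n=0\}$ are cylinders, hence clopen, so each of the three alternatives in the definition of $\varphi_{\omega,\eta}(p_{T_n})$ is selected on a clopen neighborhood of $(\omega,\eta)$. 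Thus $\varphi$ is locally constant near $(\omega,\eta,c)$ and continuity is immediate.

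The substantive case is $c\in L$. I would take an arbitrary sequence $(\omega_k,\eta_k,c_k)\to(\omega,\eta,c)$ and show $\varphi_{\omega_k,\eta_k}(c_k)\to\varphi(c)$, where $\varphi(c)$ denotes the $\omega,\eta$-independent $L$-map value. Splitting the sequence according to whether $c_k$ lies in $L$, in $P$, or in $Q$, the terms with $c_k\in L$ are handled by the continuity of the $L$-map on $L$, which I would verify directly at its only accumulation points $(0,0)$ and $(0,\pm2)$ (for instance $(0,z_j)\to(0,2)$ maps to $(0,z_{j-1})\to(0,2)$). For the terms with $c_k\in P$, the case $c_k\in Q$ being symmetric under reflection, convergence $c_k\to c$ forces the column index of $c_k$ to tend to infinity. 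If such a $c_k$ is not a top point, then $\varphi_{\omega_k,\eta_k}(c_k)=p_{t_k+1}$ is simply the next point up the same column, and a short inspection of the fiber geometry shows that its limit is exactly the next point of $L$ away from the origin starting from $c$ — precisely the value $\varphi(c)$ prescribed by the $L$-map.

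The main obstacle, and the reason the construction is arranged as it is, lies in the remaining top points. The only top points are $p_{T_n},q_{T_n}=(x_n,\pm3)$, and these accumulate in $C$ solely at $(0,\pm3)$; hence the genuine $(\omega,\eta)$-dependence of $\varphi$ can affect a limit only when $c=(0,\pm3)$. At such a $c$ the image $\varphi_{\omega_k,\eta_k}(c_k)$ is one of $p_{T_n+1}$, $q_{T_n+1}$, or the origin, according to the timer and controller; but $p_{T_n+1}$ and $q_{T_n+1}$ are the bottom points $(x_{n+1},\pm y_{n+1})$ of the next column, so all three possibilities converge to $(0,0)$ as $n\to\infty$. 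Since the $L$-map also sends $(0,\pm3)$ to the origin, the limit equals $\varphi(c)$ regardless of the values read from the timer and controller. Assembling the three subsequences gives $\varphi_{\omega_k,\eta_k}(c_k)\to\varphi(c)$, which completes the proof of joint continuity of the fiber map and hence the continuity of $f$.
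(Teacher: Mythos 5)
Your proof is correct and follows essentially the same route as the paper's: both reduce to joint continuity of $\varphi$, observe that the only $(\omega,\eta)$-dependence occurs at the isolated top/bottom points where it involves only finitely many symbols (clopen cylinders), and use the key fact that near $(0,\pm3)$ all candidate images $p_{T_n+1}$, $q_{T_n+1}$, $(0,0)$ converge to the origin. Your sequence-based treatment of $c\in L$ merely spells out the ``continuity is clear'' step that the paper leaves to the reader.
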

\begin{proof}
We need to show that $\varphi$ is continuous as a function of the three variables $\omega, \eta\in\Omega$ and $c\in C$. If $c$ is not one of the top or bottom points $p_{T_n}, q_{T_n},$ or $(0,\pm3)$, then there is a neighborhood of $c$ where $\varphi$ does not depend on $\omega, \eta$, and continuity is clear. If $c$ is one of the points $p_{T_n}, q_{T_n}$, then it is isolated in $C$ and the value of $\varphi_{\omega,\eta}(c)$ only depends on the first $n$ symbols in $\omega, \eta$. If $c=(0,3)$, then the sets $\{p_{T_n} ~|~ n\geq N\}$ are neighborhoods of $c$ in $C$, and the diameter of the set of possible images $\{p_{T_n+1}, q_{T_n+1}, (0,0) ~|~ n\geq N\}$ goes to zero as $N\to\infty$. The same holds for $c=(0,-3)$ using the neighborhoods $\{q_{T_n} ~|~ n\geq N\}$.
\end{proof}

\begin{lemma}\label{lem:pairsonly}
If $f^t(\omega,\eta,c) = (\omega^t,\eta^t,c^t)$, $t\in\mathbb{N}_0$, is an orbit of $f$, then either $c^t$ is eventually constant, or there exists $t_0$ such that $\omega=\tau^{t_0}(\underline{0})$ and $c\in\{p_{t_0},q_{t_0}\}$.
\end{lemma}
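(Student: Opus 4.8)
The plan is to trace the three coordinates separately. Since the first two coordinates of $f$ are the adding machine and the identity, along any orbit we have $\omega^t = \tau^t(\omega)$ and $\eta^t = \eta$, while the fiber point evolves by $c^{t+1} = \varphi_{\tau^t(\omega),\,\eta}(c^t)$. As $C$ is the disjoint union $P \cup Q \cup L$, I would split into cases according to where $c = c^0$ lies. If $c \in L$, the fiber map on $L$ is independent of $(\omega,\eta)$ and completely explicit, so I would just follow it: on the positive $y$-axis each point is pushed one step away from the origin until it either stops at the fixed point $(0,2)$ or climbs to $(0,3) = (0,z_1)$ and is then sent to the fixed point $(0,0)$; the negative axis is symmetric. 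Hence every orbit meeting $L$ reaches $(0,0)$ or $(0,\pm2)$ in finitely many steps, so $c^t$ is eventually constant and the first alternative of the lemma holds.

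Next suppose $c \in P \cup Q$, say $c = p_s$ (the case $c = q_s$ is identical by the reflection symmetry). As long as the orbit avoids the ``fail'' branch at the top points, the index increases by exactly one per step, so $c^t$ is $p_{s+t}$ or $q_{s+t}$, and the top of the $n$-th column (index $T_n$) is reached exactly at time $t = T_n - s$, for each $n$ with $T_n \ge s$. If the timer condition fails at some such moment, i.e.\ $d_\Omega(\omega^{T_n - s}, \tau^{T_n}(\underline 0)) \ge 2^{-n}$, then the orbit lands on the fixed point $(0,0)$ and $c^t$ is eventually constant. Otherwise it passes every top point, climbs forever (switching between $P$ and $Q$ according to $\eta$), and so $c^t$ takes infinitely many distinct values and is not eventually constant.

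The crux is to extract what ``climbs forever'' forces on $\omega$, and here I would use that $\tau$ is an isometry. At the $n$-th top point, reached at time $T_n - s$, the timer reads $\tau^{T_n - s}(\omega)$, and since $\tau^{T_n}(\underline 0) = \tau^{T_n - s}\!\big(\tau^{s}(\underline 0)\big)$, isometry invariance collapses the test to
\[
d_\Omega\big(\tau^{T_n - s}(\omega),\, \tau^{T_n}(\underline 0)\big) = d_\Omega\big(\omega,\, \tau^{s}(\underline 0)\big),
\]
the same quantity for every $n$. Thus passing the $n$-th top point means precisely $d_\Omega(\omega, \tau^s(\underline 0)) < 2^{-n}$, and since the orbit meets infinitely many top points with thresholds $2^{-n} \to 0$, climbing forever forces $d_\Omega(\omega, \tau^s(\underline 0)) = 0$, that is $\omega = \tau^{s}(\underline 0)$. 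Taking $t_0 = s$ then yields $\omega = \tau^{t_0}(\underline 0)$ and $c = p_{t_0} \in \{p_{t_0}, q_{t_0}\}$, the second alternative.

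I expect this isometry bookkeeping to be the one genuinely essential step: the point is that the adding-machine offset $T_n$ cancels, so that the shrinking thresholds $2^{-n}$ all probe the single distance $d_\Omega(\omega, \tau^s(\underline 0))$, pinning $\omega$ down exactly. Everything else — the explicit dynamics on $L$, the observation that the index increments by one, and the fact that the only escape from $P \cup Q$ is onto the fixed point $(0,0)$ — is routine verification from the definition of $\varphi$.
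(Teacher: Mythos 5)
Your proof is correct and takes essentially the same route as the paper's: the same case split (the orbit either reaches $L$, where it is eventually fixed, or stays in $P\cup Q$ forever) and the same key step, using that $\tau$ is an isometry to collapse each timer test at a top point to the single distance $d_\Omega(\omega,\tau^{t_0}(\underline{0}))<2^{-n}$, which forces $\omega=\tau^{t_0}(\underline{0})$. The additional detail you give on the dynamics of $L$ and on the fail branch landing at $(0,0)$ is just an expansion of what the paper leaves implicit.
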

\begin{proof}
If $c^t$ ever reaches $L$ then it is eventually fixed. So suppose $c^t\in P\cup Q$ for all $t$. Choose $t_0$ such that $c\in\{p_{t_0}, q_{t_0}\}$. Then $c^t\in\{p_{t_0+t}, q_{t_0+t}\}$ for all $t$. In particular, when $t_0+t=T_n$, we get $d_\Omega(\tau^t(\omega),\tau^{T_n}(\underline{0}))<2^{-n}$. Since $\tau$ is an isometry this reduces to $d_\Omega(\omega,\tau^{t_0}(\underline{0}))<2^{-n}$, and since $n$ is arbitrary we get $\omega=\tau^{t_0}(\underline{0})$.
\end{proof}

\begin{proposition}
The dynamical system $(X,d,f)$ is DC3 but not Li-Yorke chaotic.
\end{proposition}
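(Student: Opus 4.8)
The plan is to prove the two halves separately, and for the negative half to establish the much stronger fact that every Li-Yorke scrambled set has at most two points. I would begin by using Lemma~\ref{lem:pairsonly} to sort the points of $X$ into two classes: call a point \emph{free} if its fiber coordinate is not eventually constant, and \emph{settled} otherwise. By the lemma a free point has $\omega=\tau^{t_0}(\underline0)$ and $c\in\{p_{t_0},q_{t_0}\}$ for some $t_0$; since its fiber never reaches $L$, the timer condition $d_\Omega(\tau^t\omega,\tau^{T_n}(\underline0))<2^{-n}$ holds at every column top (at the relevant time $\tau^t\omega=\tau^{T_n}(\underline0)$), so the orbit simply climbs the columns forever, sitting at a common index $t_0+t$ on the side of the current column dictated by $\eta$. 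The core claim is that neither class contains a Li-Yorke pair, whence no Li-Yorke scrambled set can have more than one free and one settled point.

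For two settled points $x,y$ all three coordinatewise distances are eventually constant (the two $\Omega$-distances are constant because $\tau$ is an isometry and the controller coordinate is fixed, and the fiber distance is eventually constant because both fibers are), so $d(f^tx,f^ty)$ is eventually constant and $\liminf=\limsup$, excluding a Li-Yorke pair. For two distinct free points, since $d$ is the max metric we have $d(f^tx,f^ty)\ge\max\{d_\Omega(\omega_x,\omega_y),d_\Omega(\eta_x,\eta_y)\}$ for all $t$; if $(\omega_x,\eta_x)\ne(\omega_y,\eta_y)$ this is a positive constant, so $\liminf_t d(f^tx,f^ty)>0$. If instead $(\omega_x,\eta_x)=(\omega_y,\eta_y)$, then both free orbits start at the same index $t_0$, reach each column top simultaneously, and since $\varphi_{\omega,\eta}(p_{T_n})=\varphi_{\omega,\eta}(q_{T_n})$ they coincide from the first top onward, giving $\limsup_t d(f^tx,f^ty)=0$. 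Either way the pair is not Li-Yorke, so the system is not Li-Yorke chaotic.

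For the DC3 assertion I would exhibit an uncountable DC3 scrambled set among the free points $x_\eta=(\underline0,\eta,p_0)$, $\eta\in\Omega$, all of which climb the columns without resetting. For $\eta\ne\eta'$ the two orbits keep a common index, lying on the same side during a column exactly when $\eta,\eta'$ agree at the coordinate governing that column and on opposite sides otherwise; on opposite sides the fiber points are reflections $p_s,q_s$, so the fiber distance is $2|y|$, which lies in $(4,6]$ throughout the $z$-part of the column and in $(0,2]$ throughout the much shorter $y$-part. Hence, fixing any $s\in(2,4)$, the inequality $d(f^tx_\eta,f^tx_{\eta'})\ge s$ holds \emph{precisely} during the $z$-parts of the columns where $\eta,\eta'$ disagree (the constant $\Omega$-floor $2^{-k}<2<s$ never interferes). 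Column $c$ contributes exactly $10^c$ such steps while the elapsed time through column $C$ is $\sum_{c\le C}(c+10^c)\sim\tfrac{10}{9}10^C$; the geometric growth makes the last column dominate the Ces\`aro average, so evaluating at the end of column $C$ the proportion of times with $d<s$ falls below $\tfrac1{10}$ when $\eta,\eta'$ disagree on the governing coordinate and rises above $\tfrac9{10}$ when they agree. Since the indicator of $\{d<s\}$ is the same for every $s\in(2,4)$, the functions $\Phi_{x_\eta,x_{\eta'}}$ and $\Phi^*_{x_\eta,x_{\eta'}}$ are constant on that interval, and provided $\eta,\eta'$ agree infinitely often and disagree infinitely often we get $\Phi_{x_\eta,x_{\eta'}}(s)\le\tfrac1{10}<\tfrac9{10}\le\Phi^*_{x_\eta,x_{\eta'}}(s)$ for all $s\in(2,4)$, so the pair is DC3.

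It remains to produce uncountably many $\eta$ pairwise agreeing and disagreeing infinitely often: I would fix a partition $\mathbb N_0=A\sqcup B$ into two infinite sets, make all sequences identically $0$ on $A$, and on $B$ take the indicator functions of an almost disjoint family of infinite subsets of $B$ (of cardinality the continuum). Any two such sequences agree throughout the infinite set $A$ and disagree on the infinite symmetric difference of the corresponding subsets, exactly as the DC3 estimate requires. The main obstacle, and the only place the detailed design of $C$ is used, is the Ces\`aro estimate of the previous paragraph: one must verify that the contribution of all earlier columns is negligible against $10^C$, so that the average genuinely oscillates between values near $0$ and near $1$; the separation of the fiber distances into the bands $(0,2]$ and $(4,6]$, the dominance $10^c\gg c$, and the exact sizes of the columns are precisely what make the threshold $s\in(2,4)$ detect only the $z$-parts and keep the two distribution functions comparable across the whole interval.
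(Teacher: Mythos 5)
Your proposal is correct, and its two halves compare differently with the paper. The DC3 half is essentially the paper's own argument: the same scrambled set $\{\underline{0}\}\times S\times\{p_0\}$ (you make explicit, via an almost disjoint family, the construction of the uncountable set $S$ with infinitely many pairwise agreements and disagreements, which the paper merely asserts exists), the same synchronized climb through the columns, and the same counting in which the $10^C$ steps of the final $z$-part dominate $T_C\sim\tfrac{10}{9}\,10^C$; the paper obtains the interval $(1,4)$ where you obtain $(2,4)$, an immaterial difference. Where you genuinely diverge is the negative half. The paper's proof is a short countability argument that does not use Lemma~\ref{lem:pairsonly} at all: since the adding machine and the identity are isometries on $\Omega$, proximality ($\liminf_t d(f^tx,f^tx')=0$) already forces $\omega=\omega'$ and $\eta=\eta'$, so every Li-Yorke scrambled set injects into the countable fiber $C$ and is therefore at most countable. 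You instead run the free/settled dichotomy from Lemma~\ref{lem:pairsonly}: settled pairs have eventually constant distance sequences; free pairs are either distal (different $(\omega,\eta)$, by the same isometry observation) or asymptotic (same $(\omega,\eta)$, since $\varphi_{\omega,\eta}(p_{T_n})=\varphi_{\omega,\eta}(q_{T_n})$ merges the two fiber orbits at the first column top); pigeonhole then bounds every scrambled set by two points. (One small implicit step there: ``same $\omega$ implies same $t_0$'' needs injectivity of $n\mapsto\tau^n(\underline{0})$, which holds because the adding-machine orbit of $\underline{0}$ never returns.) Your route is longer but proves the sharper statement that the paper records only in the remark following the proposition, namely that Li-Yorke scrambled sets have cardinality at most $2$; the paper's route buys brevity, needing nothing about the fiber dynamics beyond countability of $C$.
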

\begin{proof}
Let $S$ be an uncountable subset of $\Omega$ such that for any two points $\eta, \eta' \in S$ the two sets
\begin{equation*}
N(\eta, \eta') = \{n ~|~ \eta_n = \eta'_n \}, \quad M(\eta, \eta') = \{n ~|~ \eta_n \neq \eta'_n \}
\end{equation*}
are both infinite. We claim that the set $\{\underline{0}\} \times S \times \{p_0\}$ is a DC3 scrambled set for $f$. Let $x=(\underline{0},\eta,p_0)$, $x'=(\underline{0},\eta',p_0)$ be distinct points in this set. Clearly $d_\Omega(\eta,\eta')\leq1$. At time $T_{n-1}$ our points arrive at the top or bottom of column $n-1$, and we look into position $n-1$ in the control sequences $\eta, \eta'$ to see whether the points move to the upper or lower half-plane as they start their walk through column $n$. Therefore we have
\begin{equation*}
n-1\in N(\eta, \eta') \implies \frac{\#\{t\leq T_n ~|~ d(f^t(x), f^t(x'))\leq 1\}}{T_n} \geq \frac{10^n}{T_n}
\end{equation*}
and this converges to $\frac{9}{10}$ as $n$ converges to $\infty$. On the other hand,
\begin{equation*}
n-1\in M(\eta, \eta') \implies \frac{\#\{t\leq T_n ~|~ d(f^t(x), f^t(x'))\leq 4\}}{T_n} \leq \frac{T_n-10^n}{T_n}
\end{equation*}
and this converges to $\frac{1}{10}$ as $n\to\infty$. Since both situations occur for infinitely many $n$, it follows that $\Phi_{x,x'}(s)\leq\frac{1}{10}\leq\frac{9}{10}\leq\Phi^*_{x,x'}(s)$ on the whole interval $s\in(1,4)$. Thus $x,x'$ are a DC3 pair. This completes the proof that $(X,d,f)$ is DC3 chaotic.

Now suppose that two points $x=(\omega,\eta,c)$, $x'=(\omega',\eta',c')$ in $X$ are a Li-Yorke pair. Since the adding machine and the identity map are both isometries on $\Omega$, the proximality condition $\liminf d(f^t(x),f^t(x')) = 0$ implies that $\omega=\omega'$ and $\eta=\eta'$. Since the set $C$ is only countably infinite, this shows that all Li-Yorke scrambled sets for $f$ are at most countable. In other words, $(X,d,f)$ is not Li-Yorke chaotic.
\end{proof}

\begin{remark}
There are Li-Yorke pairs for $f$, such as the pair $(\underline{0},\underline{0},p_0)$, $(\underline{0},\underline{0},(0,0))$. But using Lemma~\ref{lem:pairsonly} it is easy to show that any Li-Yorke scrambled set for $f$ has cardinality at most 2.
\end{remark}

\begin{thmc}
There is a dynamical system on a dendrite which is DC3 but not Li-Yorke chaotic.
\end{thmc}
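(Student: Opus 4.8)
The plan is to feed the zero-dimensional system $(X,d,f)$ built above into Corollary~\ref{cor:main} and then verify that both chaotic features survive the isometric embedding. First I would check that $X=\Omega\times\Omega\times C$ is a totally disconnected compact metric space: $\Omega$ is a Cantor set and $C$ is countable and compact, so each factor is totally disconnected (a nondegenerate connected metric space is uncountable, so a countable compactum has only singleton connected subsets), and a finite product of totally disconnected compacta is again one. Hence Corollary~\ref{cor:main} applies and furnishes a dendrite $(Y,\rho)$, an isometry $i\colon X\to E(Y)$, and a continuous $F\colon Y\to Y$ with $F\circ i=i\circ f$ in which every point of $Y\setminus E(Y)$ is eventually fixed.

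Next I would transfer DC3 chaos. Because $i$ is an isometry intertwining $f$ and $F$, we have $\rho\big(F^t(i(x)),F^t(i(x'))\big)=d\big(f^t(x),f^t(x')\big)$ for all $t\in\mathbb{N}_0$ and all $x,x'\in X$. Consequently the distribution functions $\Phi_{i(x),i(x')}$ and $\Phi^*_{i(x),i(x')}$ computed in $(Y,\rho,F)$ agree verbatim with $\Phi_{x,x'}$ and $\Phi^*_{x,x'}$ computed in $(X,d,f)$. The $i$-image of the uncountable DC3 scrambled set exhibited in the Proposition is therefore again DC3 scrambled, so $(Y,\rho,F)$ is DC3 chaotic. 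This is exactly the place where we need an \emph{isometric} rather than a merely topological embedding, since DC3 is sensitive to the metric.

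Finally I would rule out Li-Yorke chaos by bounding the cardinality of an arbitrary Li-Yorke scrambled set $S\subset Y$, splitting it as $\big(S\cap E(Y)\big)\cup\big(S\cap(Y\setminus E(Y))\big)$. Two distinct points of $S\cap E(Y)$ form a Li-Yorke pair in $Y$, which pulls back through the isometry $i$ to a Li-Yorke pair in $X$; since $(X,d,f)$ admits only countable Li-Yorke scrambled sets, $S\cap E(Y)$ is countable. For the remaining part, every point of $Y\setminus E(Y)$ is eventually fixed, so its $F$-orbit is eventually constant. Thus for two such points $y,y'$ the distance $\rho\big(F^t(y),F^t(y')\big)$ is eventually equal to the constant distance between their terminal fixed points: if that constant is zero the upper limit of the distances vanishes, and if it is positive the lower limit is positive, so in either case the pair is not Li-Yorke. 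Hence $S$ meets $Y\setminus E(Y)$ in at most one point, $S$ is countable, and $(Y,\rho,F)$ is not Li-Yorke chaotic.

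The only real content is this last Li-Yorke bound, and within it the treatment of the added points $Y\setminus E(Y)$; the eventually-fixed conclusion of Theorem~\ref{th:extension} is precisely what renders those points harmless, since an eventually-constant orbit can contribute to a Li-Yorke pair only with a partner whose orbit is also eventually constant and equal to it, which the positive-upper-limit requirement then forbids. Assembling the three steps yields a dendrite carrying a dynamical system that is DC3 but not Li-Yorke chaotic, proving Theorem~\ref{th:DC3}.
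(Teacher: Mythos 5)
Your proposal is correct and follows essentially the same route as the paper: apply Corollary~\ref{cor:main} to the totally disconnected compact system $(X,d,f)$, note that the isometric conjugacy preserves the distribution functions and hence the DC3 scrambled set, and observe that any Li-Yorke scrambled set for $F$ consists of (the image of) a Li-Yorke scrambled set for $f$ plus at most one eventually fixed point, since two eventually fixed points cannot form a Li-Yorke pair. Your write-up merely makes explicit two points the paper leaves implicit (why $C$ is totally disconnected, and why eventually constant orbits kill Li-Yorke pairs), which is fine.
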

\begin{proof}
The space $X$ constructed above is a product of totally disconnected compact metric spaces, and is therefore itself a totally disconnected compact metric space. Thus we can apply Corollary~\ref{cor:main} to construct a dendrite map $(Y,\rho,F)$ containing an isometric copy of the system $(X,d,f)$, and with all other points eventually fixed. The DC3 scrambled set for $f$ is also present for $F$, which gives us DC3 chaos. On the other hand, two eventually fixed points cannot form a Li-Yorke pair, so any Li-Yorke scrambled set for $F$ is made of a Li-Yorke scrambled set for $f$ with at most one new point adjoined. Since $f$ has no uncountable (nor even infinite) Li-Yorke scrambled sets, neither does $F$.
\end{proof}


\begin{thebibliography}{99}

\bibitem{Closed}
D.~Ar\'{e}valo, W.~J.~Charatonik, P.~Pellicer~Covarrubias, L.~Sim\'{o}n,
Dendrites with a closed set of end points.
\emph{Topology Appl.} \textbf{115} (2001), no.~1, 1--17.

\bibitem{BHS}
F.~Balibrea, R.~Hric, \v{L}.~Snoha,
Minimal sets on graphs and dendrites.
\emph{Internat. J. Bifur. Chaos Appl. Sci. Engrg.} \textbf{13} (2003), no.~7, 1721--1725.
%
%\bibitem{BSS}
%F.~Balibrea, M.~\v{S}tef\'{a}nkov\'{a}, J.~Sm\'{i}tal,
%The three versions of distributional chaos.
%\emph{Chaos Solitons Fractals} \textbf{23} (2005), no.~5, 1581--1583.

\bibitem{Drwiega}
T.~Drwi\k{e}ga,
Dendrites and chaos.
\emph{Internat. J. Bifur. Chaos Appl. Sci. Engrg.} \textbf{28} (2018), no.~13, 1850158, 14 pp.

\bibitem{Drwiega19}
T.~Drwi\k{e}ga and P.~Oprocha,
$\omega$-Chaos without infinite LY-scrambled set on Gehman dendrite.
\emph{Internat. J. Bifur. Chaos Appl. Sci. Engrg.} \textbf{29} (2019), no.~5, 1950070, 6 pp.

\bibitem{HricMal} % DC2 <=> h(f)>0 on graphs (and this implies uncountable LY)
R.~Hric and M.~M\'{a}lek,
Omega limit sets and distributional chaos on graphs,
\emph{Topology Appl.} \textbf{153}, (2006) 2469--2475.

\bibitem{Hurewicz}
W.~Hurewicz and H.~Wallman.
\emph{Dimension Theory}
Princeton Mathematical Series, v.~4.
Princeton University Press, Princeton, NJ, 1941.

\bibitem{Koc} % Nice summary of existing results on interval, trees, and graphs
Z.~Ko\v{c}an,
Chaos on one-dimensional compact metric spaces,
\emph{Internat. J. Bifur. Chaos Appl. Sci. Engrg.} \textbf{22} (2012), no.~10, 1250259, 10 pp.

\bibitem{KKM}
Z.~Ko\v{c}an, V.~Korneck\'{a}-Kurkov\'{a}, M.~M\'{a}lek,
Entropy, horseshoes and homoclinic trajectories on trees, graphs and dendrites.
\emph{Ergodic Theory Dynam. Systems} \textbf{31} (2011), no.~1, 165--175.

\bibitem{KKM14}
Z.~Ko\v{c}an, V.~Korneck\'{a}-Kurkov\'{a}, M.~M\'{a}lek,
Horseshoes, entropy, homoclinic trajectories, and Lyapunov stability,
\emph{Internat. J. Bifur. Chaos Appl. Sci. Engrg.} \textbf{24} (2014), no.~2, 1450016, 9 pp.

\bibitem{Li} % DC1 <=> DC3 on trees
R.~Li,
A note on the three versions of distributional chaos,
\emph{Communications in Nonlinear Science and Numerical Simulation} \textbf{16}, 1993--1997.

\bibitem{Mal07} %DC1 <=> DC3 on graphs (for pairs)
M.~M\'{a}lek,
Distributional chaos in dimension one,
\emph{Grazer Math. Berichte} \textbf{351}, 110--113.

\bibitem{Nadler}
S.~B.~Nadler~Jr.,
\emph{Continuum Theory},
Marcel Dekker, New York, 1992.

\bibitem{Zuzka}
Z.~Roth,
Distributional chaos and dendrites,
\emph{Internat. J. Bifur. Chaos Appl. Sci. Engrg.} \textbf{28} (2018), no.~14, 1850178, 10 pp.

\bibitem{Smital} %DC1 <=> DC3 <=> h(f)>0 on the interval (for pairs)
B.~Schweizer and J.~Sm\'{i}tal,
Measures of chaos and a spectral decomposition of dynamical systems on the interval,
\emph{Trans. Amer. Math. Soc.} \textbf{344} (1994), 737--754.

\bibitem{Stefankova} %Definition of DC3
J.~Sm\'{i}tal and M.~\v{S}tef\'{a}nkov\'{a},
Distributional chaos for triangular maps,
\emph{Chaos Solitons Fractals} \textbf{21} (2004), no.~5, 1125--1128.

\bibitem{Spitalsky}
V.~\v{S}pitalsk\'{y},
Omega-limit sets in hereditarily locally connected continua.
\emph{Topology Appl.} \textbf{155} (2008), no.~11, 1237--1255.


\end{thebibliography}
\end{document}